\let\BFseries\bfseries\def\bfseries{\BFseries\mathversion{bold}} 
\newtheorem{thm}{Theorem}
\newtheorem{lemma}[thm]{Lemma}
\newtheorem{rem}[thm]{Remark}
\theoremstyle{definition}
\newcommand{\PRO}{\mathbb{P}}
\newcommand{\ER}{\mathbb{E}}
\newcommand{\eps}{\varepsilon}
\newcommand{\dd}{\text{\rm d}}
\newcommand{\mD}{{\mathcal D}}
\newcommand{\RV}{{\mathcal RV}}
\begin{document}

\title{The first passage time problem  over a moving boundary \\ \vspace*{0.4 cm}  for asymptotically stable  L\'evy processes \\}

\author{\renewcommand{\thefootnote}{\arabic{footnote}} {\sc Frank Aurzada}\footnotemark[1] \hspace{2pt} and \renewcommand{\thefootnote}{\arabic{footnote}}{\sc Tanja Kramm}\footnotemark[1]}
\bigskip \bigskip \bigskip \bigskip \bigskip \bigskip\bigskip \bigskip\bigskip \bigskip \bigskip \bigskip \bigskip \bigskip \bigskip \bigskip\bigskip \bigskip
\bigskip
\bigskip
\bigskip
\bigskip
\bigskip
\bigskip
\bigskip
\date{\today}

 \footnotetext[1]{
 Technische Universit\"at  Darmstadt, Fachbereich Mathematik, Arbeitsgruppe Stochastik, Schlossgartenstr.\ 7, 64289 Darmstadt, Germany
{\sl aurzada@mathematik.tu-darmstadt.de},
{\sl tanjakramm@gmail.com}
 }

\date{\today}

\maketitle
\begin{abstract}
We study the asymptotic tail behaviour of the first-passage time  over a moving boundary for asymptotically $\alpha$-stable L\'evy processes with $\alpha<1$.
  
Our main result states that if the left tail of the L\'evy measure is regularly varying with index $- \alpha$ and the moving boundary is equal to $1 - t^{\gamma}$ for some $\gamma<1/\alpha$, then the probability that the process stays below the moving boundary has the same asymptotic polynomial order as in the case of a constant boundary. The same is true for the increasing boundary $1 + t^{\gamma}$ with $\gamma<1/\alpha$ under the assumption of a regularly varying right tail with index $- \alpha$.
 

\end{abstract}
%

\vfill

\noindent
\textbf{Key words and phrases:}\
L\'evy process; moving boundary; one-sided exit problem; one-sided boundary problem; first passage time; survival exponent; boundary crossing probability; boundary crossing problem; one-sided small deviations; lower tail probability
\noindent \\
\textbf{ 2010 AMS Mathematics Subject Classification:}
 60G51 
\newpage

\section{Introduction and main results}\label{mainresult}

This paper is concerned with the asymptotic tail behaviour of the first-passage time over a moving boundary. For a stochastic process $(X(t))_{t\geq0}$ and a function $f:\mathbb{R}_+ \rightarrow \mathbb{R}$, the so-called moving boundary, the question is to determine the asymptotic rate of the probability
\begin{align}\label{problem}
\PRO \left(  X(t) \leq f(t) , \text{ } 0 \leq t \leq T  \right), \qquad \text{as } T \rightarrow \infty.
\end{align}
If this probability is asymptotically polynomial of order $-\delta$ (e.g.\ if it is regularly varying with index $-\delta$, in which case we will write $\RV (- \delta)$), the number $\delta$ is called the \textit{survival exponent} or \textit{persistence exponent}. If the function $f$ is constant then we are in the classical framework of first passage times over a constant boundary.

This problem is a classical question which is relevant in a number of different applications, a recent overview of results is presented in \cite{AurSim} and \cite{BMS13}. 

\medskip

For L\'evy processes, the study of the first passage time distribution over a \textit{constant} boundary is a classical area of research. The results follow from fluctuation theory; e.g.\ \cite{Rog} shows that $ \PRO (X(t) \leq c, \text{ } 0 \leq t \leq T ) $ varies regularly with index $- \rho \in (-1,0)$ if and only if $X$ satisfies Spitzer's condition with $\rho \in (0,1)$, that is (cf.\ \cite{BerDon}), $\PRO (X(t) >0) \to \rho$  as  $t \to \infty$.

 Similar arguments as for L\'evy processes were already used for random walks with zero mean (see e.g.\  \cite{feller}). If the process does not necessarily satisfy Spitzer's condition, various results were obtained for a constant boundary by \cite{Bal,BerDon2,Bor1,Bor2,DenShn,Don2,SupLP}.  

However, even for Brownian motion, the question involving \textit{moving} boundaries in  (\ref{problem}) is quite non-trivial. It is studied by \cite{Novref,Uch,Gae,jenler,Sal,Nov,Novneu,AK} in different ways. The works \cite{Novref} (for the increasing boundary case) and \cite{Uch} (for the case of increasing and decreasing boundary) are the first to state an integral test for the boundary $f$, for which the survival exponent remains $1/2$. More precisely, they prove under some additional regularity assumptions that
\begin{align}\label{eqn: necsufBM}
 \int_1^{\infty} |f(t)| t^{-3/2} \dd t < \infty \Longleftrightarrow  \PRO (X(t) \leq f(t), \text{ } 0\leq t \leq T) \approx  T^{-1/2}, \text{ as } T \rightarrow \infty.
\end{align}
Here and below we use the following notation for strong and weak asymptotics. We write $f \lesssim g$ if $\limsup_{x \to \infty} f(x)/g(x) < \infty$ and $f\approx g$ if  $f \lesssim g$ and $g \lesssim f$. Furthermore, $f \sim g$ if $f(x)/g(x) \rightarrow 1$ as $x \rightarrow \infty$.

In this paper we look at L\'evy processes $(X(t))_{t\geq0}$, in particular at L\'evy processes which belong to the domain of attraction of a strictly stable L\'evy process for $\alpha \in (0,1)$, that is, there exists a deterministic function $c$ such that 
\begin{align*}
 \frac{X(t)}{c(t)} \overset{d}{\longrightarrow} Z(1), \quad \text{as } t \to \infty,
\end{align*}
where $Z$ is a strictly stable L\'evy process with index $\alpha \in (0,1)$ and positivity parameter $\rho \in [0,1]$.  It is well known that if such a function $c$ exists, then $c \in \RV (1/\alpha)$ (cf.\  \cite{feller}, \cite{Brei}). We will write $X \in \mD(\alpha, \rho)$ in this case.

%

\medskip

Let us state our results. We distinguish decreasing and increasing moving boundaries.

\smallskip

For \textit{decreasing boundaries} so far it was proved by  \cite{AKS} under the assumption that $X$ possesses negative jumps that
\begin{align*}
 \gamma < \frac12  \quad \Rightarrow \quad \PRO (X(t) \leq 1- t^{\gamma}, \text{ } t \leq T ) =  T^{-\rho+o(1)}, \quad \text{as } T \to \infty.
\end{align*}
Note that  \cite{AKS}  does not assume that $X \in \mD (\alpha,\rho)$. 
 Negative results (i.e.\ such that the survival exponent does change) are given in \cite{mogpec,GreNov}. Results similar to those for Brownian motion are only available under such heavy assumptions as bounded jumps or satisfying the Cram\'{e}r condition,  \cite{Nov2} or \cite{Novdis}.
 
\smallskip
Our main result here is the following:
\begin{thm}\label{thm: nonincreasing}
Let $\alpha \in (0,1)$, $\rho \in (0,1)$, and $\gamma > 0$.  Suppose that  $X \in \mD (\alpha, \rho)$ and the left tail of the L\'evy measure has a regularly varying density. If $\limsup_{t \to 0^+} \PRO ( X (t) \geq 0) <1$ then we have
\begin{align}\label{eqn: nonincreasing}
 \gamma < \frac{1}{ \alpha}  \quad \Rightarrow \quad \PRO \left( X(t)  \leq 1 - t^{\gamma}, \text{ } t \leq T  \right) = T^{-\rho +o(1)}, \quad \text{ as } T \to \infty.
\end{align}
\end{thm}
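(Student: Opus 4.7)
The proof splits into matching upper and lower bounds.

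\emph{Upper bound.} Since $\gamma > 0$ forces $1 - t^\gamma \leq 1$ for every $t \geq 0$, monotonicity in the boundary gives
\[
\PRO(X(t) \leq 1 - t^\gamma, \, t \leq T) \leq \PRO(X(t) \leq 1, \, t \leq T),
\]
and the right-hand side equals $T^{-\rho + o(1)}$ by Rogozin's theorem, since $X \in \mD(\alpha,\rho)$ with $\rho \in (0,1)$ implies Spitzer's condition with parameter $\rho$.

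\emph{Lower bound.} My plan is a multi-scale block construction exploiting the strict gap $\gamma < 1/\alpha$. Fix a large parameter $\lambda > 1$ and split $[0, T]$ into geometric blocks $I_k = [\lambda^{k}, \lambda^{k+1}]$, $k = 0, \ldots, n-1$, with $\lambda^n \asymp T$ (so $n \asymp \log T / \log \lambda$). On each block $I_k$, the boundary $1 - t^\gamma$ decreases by $\asymp \lambda^{k\gamma}$, whereas the natural scale of $X$ on $I_k$ is $c(\lambda^k) \asymp \lambda^{k/\alpha}$, strictly larger since $\gamma < 1/\alpha$. The key step is a uniform block-wise lower bound: conditional on $X(\lambda^k)$ lying at a fixed multiple of $c(\lambda^k)$ below the boundary, the probability that $X$ stays below $1 - t^\gamma$ throughout $I_k$ and ends at a similarly favourable depth at $\lambda^{k+1}$ is at least $c_0(\lambda) > 0$, depending only on $\lambda$ and the stable limit $Z$.

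Using the convergence $X(\lambda^k s)/c(\lambda^k) \to Z(s)$ together with the approximate self-similarity of $X$, one can express $c_0(\lambda)$ via a constant-boundary exit probability for $Z$ on a unit-scale interval, which is $\asymp \lambda^{-\rho}$ by another application of Rogozin. Chaining the block estimates via the Markov property yields
\[
\PRO(X(t) \leq 1 - t^\gamma, \, t \leq T) \geq c_0(\lambda)^n \geq T^{-\rho - o(1)},
\]
where the error term tends to $0$ once $\lambda = \lambda(T) \to \infty$ slowly with $T$.

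The main obstacle is the uniform block-wise lower bound $c_0(\lambda)$. Establishing it demands a quantitative version of $X(\lambda^k \cdot)/c(\lambda^k) \to Z$ applied to a small-probability event, which is delicate as $k$ varies across the $\asymp \log T$ scales. Here the regularly varying density of the L\'evy measure's left tail is crucial: it provides the fine local control of large negative jumps needed to ``reset'' $X$ at the boundary of each block, thereby compensating the fall of $t^\gamma$ without paying a super-polynomial cost. The assumption $\limsup_{t\to 0^+} \PRO(X(t) \geq 0) < 1$ is used in the initial block to guarantee a positive-probability mechanism for $X$ to descend below zero and initiate the iteration.
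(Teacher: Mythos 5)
Your upper bound matches the paper's (Remark 2.1) and is correct. For the lower bound, however, there is a genuine gap, and it is exactly the step you yourself flag as the ``main obstacle.'' Your plan is a multi-scale chaining argument over geometric blocks, whereas the paper does something structurally different: it decomposes the \emph{process} rather than time, writing $X = Y_T - S_T$ where $S_T$ is a subordinator built from a vanishing fraction $\delta(T) = (\ln\ln T)^{-1}\wedge\tfrac12$ of the large negative jumps, and $Y_T$ is the independent remainder. Independence then gives
\[
\PRO\bigl(X(t)\le 1-t^\gamma,\;t\le T\bigr)\;\ge\;\PRO\bigl(Y_T(t)\le\tfrac12,\;t\le T\bigr)\cdot\PRO\bigl(-S_T(t)\le\tfrac12 - t^\gamma,\;t\le T\bigr),
\]
and the two factors are handled by separate lemmas: the first is $T^{-\rho+o(1)}$ by a $T$-dependent modification of Kwasnicki--Malecki--Ryznar's bounds for suprema of L\'evy processes (Lemma~\ref{lem: constant}), and the second is $T^{o(1)}$ because $S_T$'s Laplace exponent is small enough that the subordinator outruns $t^\gamma$ with overwhelming probability (Lemma~\ref{lem: alpha01}). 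This is where $\gamma<1/\alpha$ and the regular variation of the left tail enter quantitatively: they make the Laplace-exponent bound (\ref{eqn: abslapace}) strong enough.

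The missing piece in your argument is the uniform block-wise bound $c_0(\lambda)$. You claim that, conditioned on a favourable entry depth of order $c(\lambda^k)$ below the boundary, the probability of staying below $1-t^\gamma$ on $I_k=[\lambda^k,\lambda^{k+1}]$ and exiting at a favourable depth is $\gtrsim \lambda^{-\rho}$, uniformly over $k$. But the convergence $X(\lambda^k\cdot)/c(\lambda^k)\to Z$ is a weak limit theorem and does not by itself give quantitative control on a small-probability event uniformly across $\asymp\log T$ scales. Worse, the chaining gives $\PRO(\cdots)\ge c_0(\lambda)^n$ with $c_0(\lambda)= C\lambda^{-\rho}$ for some $C<1$, and since $n\asymp\log T/\log\lambda$ you must take $\lambda=\lambda(T)\to\infty$ to absorb $C^n$ into $T^{-o(1)}$; that destroys the ``fixed $\lambda$'' setting in which your block estimate was stated, so the estimate must now be uniform in $\lambda$ as well, which is not established. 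Finally, your account of the hypotheses is not accurate: the regularly varying density is what makes the split L\'evy measure $\nu_{Z_T}$ into a bona fide regularly varying subordinator tail, enabling the Karamata bound (\ref{eqn: abslapace}) and the large-deviation comparison in Lemma~\ref{lem:ST}; and the assumption $\limsup_{t\to0^+}\PRO(X(t)\ge0)<1$ is used to ensure that $K(s)=\int_s^\infty\kappa(z,0)z^{-2}\,\dd z$ is finite (via Corollary 3.4 in \cite{SupLP}), not to ``descend below zero in the initial block.'' The block approach resembles the separate and genuinely different random-walk argument of Denisov--Wachtel cited in the paper, but as written it is incomplete.
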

For \textit{increasing boundaries} until now it was known from combining \cite{GreNov} and \cite{AKS} that
\begin{align*}
 \gamma <  \max \{\tfrac12, \rho \} \quad \Rightarrow \quad \PRO (X(t) \leq 1+ t^{\gamma}, \text{ }  t \leq T ) = T^{-\rho+o(1)}, \quad \text{as } T \to \infty.
\end{align*}
In this case, we have a similar statement as for decreasing boundaries.
\begin{thm}\label{thm: nondecreasing}
 Let $\alpha \in (0,1)$, $\rho \in (0,1)$, and $\gamma > 0$.  Suppose that  $X \in \mD (\alpha, \rho)$ and that the right tail of the L\'evy measure has a regularly varying density. Then we have
\begin{align}\label{eqn: nondecreasing}
 \gamma < \frac{1}{ \alpha}   \quad \Rightarrow \quad \PRO \left( X(t)  \leq 1 + t^{\gamma}, \text{ } t \leq T  \right) = T^{-\rho +o(1)}, \quad \text{ as } T \to \infty.
\end{align}
\end{thm}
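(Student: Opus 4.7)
The proof establishes matching upper and lower bounds of order $T^{-\rho + o(1)}$. For the lower bound, monotonicity in the boundary yields $\PRO(X(t) \leq 1 + t^\gamma, t \leq T) \geq \PRO(X(t) \leq 1, t \leq T)$, and since $X \in \mD(\alpha, \rho)$ satisfies Spitzer's condition with parameter $\rho$, Rogozin's theorem gives $\PRO(X(t) \leq 1, t \leq T) = T^{-\rho+o(1)}$.

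For the upper bound, the idea is that the extra room $t^\gamma$ above the level $1$ does not improve the persistence exponent, provided $\gamma < 1/\alpha$ so that $t^\gamma$ grows strictly slower than the natural scale $c(t) \in \RV(1/\alpha)$. My plan is to split $A_T := \{X(t) \leq 1 + t^\gamma,\ t \leq T\}$ according to the first crossing time $\tau := \inf\{t : X(t) > 1\}$. On $\{\tau > T\}$, the event $A_T$ reduces to the constant-boundary event $\{X(t) \leq 1, t \leq T\}$, already of the correct order. On $\{\tau \leq T\}$, the process makes an upward jump at $\tau$ into the strip $(1, 1 + \tau^\gamma]$ and must stay below $1 + t^\gamma$ thereafter. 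Via the L\'evy system (compensation) formula, the joint law of $(\tau, X(\tau^-), \Delta X(\tau))$ is explicit in terms of $\nu$ restricted to the positive half-line, and the regularly varying density hypothesis on the right tail of $\nu$ controls the overshoot quantitatively.

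The key step is then an iterative reduction: after a crossing at $\tau$, the problem of remaining below $1+t^\gamma$ on $[\tau,T]$ started from $X(\tau) \in (1,1+\tau^\gamma]$ is again a moving-boundary survival problem, but now with residual boundary $1 + (\tau+u)^\gamma - X(\tau)$ that is near zero initially. Iterating the decomposition over successive crossings and applying the self-similar scaling of $X$ through $c(t)$, one reduces the full probability to a combination of factors each controlled by the right-tail estimate on $\nu$. The assumption $\gamma < 1/\alpha$ ensures that the number of iterations contributing non-trivially grows only as $T^{o(1)}$, so that the total is bounded by $T^{o(1)} \cdot \PRO(X(t)\leq 1, t\leq T) = T^{-\rho+o(1)}$.

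The main obstacle is recovering the \emph{sharp} exponent $-\rho$, rather than the weaker bound $-\rho(1-\alpha\gamma)$ that a single scaling argument would give. A naive rescaling $Y_T(s):=X(sT)/c(T)$ only sees the boundary at its maximal value $\approx T^\gamma$ and, via the scaling of the stable supremum $\PRO(\sup_{s\leq 1} Z(s) \leq a) \approx a^{\alpha\rho}$, produces merely $\PRO(A_T) \lesssim T^{(\gamma-1/\alpha)\alpha\rho+o(1)} = T^{-\rho(1-\alpha\gamma)+o(1)}$. Retrieving the full $-\rho$ requires using the \emph{growth profile} of $t^\gamma$: near $t=0$ the boundary is essentially constant, so crossings there are as improbable as in the constant-boundary setting, and this accumulated constraint must be tracked across all scales. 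Unlike in the decreasing-boundary case (Theorem~\ref{thm: nonincreasing}), no small-time hypothesis on $\PRO(X(t)\geq 0)$ is required here, since $1+t^\gamma\geq 1$ provides uniform room at $t=0$.
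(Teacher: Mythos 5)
Your lower bound is correct and identical to the paper's (monotonicity plus Spitzer/Rogozin). The upper bound, however, diverges fundamentally from the paper's strategy and contains genuine gaps that I do not believe can be closed as sketched.

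The paper does not decompose over crossing times. Instead, it splits the \emph{process} as $X = Y_T + S_T$, where $S_T$ is a small subordinator extracted from the positive jumps of $X$ (carrying an $O((\ln\ln T)^{-1})$ fraction of the jump intensity, re-weighted so its tail is regularly varying with the right scale), and $Y_T$ is the remainder. Because $\gamma<1/\alpha$, the subordinator $S_T(n)$ exceeds $n^\gamma$ for all $n$ in a suitable range with probability $1 - T^{-\rho+o(1)}$; on that event the moving-boundary event $\{X(n)\leq 1+n^\gamma\}$ is contained in the constant-boundary event $\{Y_T(n)\leq 1\}$, which is handled by a $T$-dependent version of the Kwa\'snicki--Ma\l ecki--Ryznar estimate. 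This is where the regularly varying right tail is used: it guarantees both that the subordinator piece can be carved out and that its Laplace transform has the needed decay.

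Your proposed iterative decomposition over successive crossings of level $1$ has several concrete problems. First, the first passage time $\tau$ need not be a jump time: $X$ is only assumed to lie in the domain of attraction, so it may have a Gaussian component or a drift and cross level $1$ continuously, in which case the Lévy-system/compensation formula does not give you the clean jump overshoot picture you describe. Second, and more seriously, the claim that ``the number of iterations contributing non-trivially grows only as $T^{o(1)}$'' is unsupported and, as far as I can see, false as stated: each time the process falls back below level $1$ after a crossing it regenerates (strong Markov), so the number of up-crossings of level $1$ on $[0,T]$ is not $T^{o(1)}$ in any obvious sense. Third, after a crossing the residual problem is a survival problem with a boundary that can start \emph{negative} (since $X(\tau)$ may overshoot past $1+\tau^\gamma$ is what you need to exclude, but even on the good event $X(\tau)\in(1,1+\tau^\gamma]$ the residual boundary $1+(\tau+u)^\gamma - X(\tau)$ starts at a value in $[0,\tau^\gamma)$ and is not obviously ``smaller'' than the original), so the recursion does not visibly close or terminate. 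You correctly diagnose the central difficulty — that naive scaling only yields the exponent $-\rho(1-\alpha\gamma)$ and that the full $-\rho$ requires exploiting that the boundary is nearly constant at small scales — but the mechanism you propose does not demonstrate how this constraint accumulates to give the sharp exponent. The paper's subordinator splitting is precisely the device that makes this accumulation tractable.
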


Let us give a few comments on these results, in particular on the conditions on the L\'evy process.

First, note that  $\frac{1}{\alpha}> 1 \geq \max \{\frac12 ,\rho\}$  for $X \in \mD (\alpha, \rho)$ 
and thus,  Theorem \ref{thm: nonincreasing} improves the results of \cite{AKS} and Theorem \ref{thm: nondecreasing} improves the results of \cite{AKS} and  \cite{GreNov}. Note that \cite{GreNov} determines exact asymptotics so that  \cite{GreNov} gives a more precise result for $\gamma < \rho$. Nevertheless, our approach provides a larger class of functions where $\rho$ remains the value of the survival exponent; and this was the main motivation of this work. Furthermore, our results indicate that the class of boundaries where the survival exponent remains the same as for the constant boundary case also depends on the tail of the L\'evy measure and not  only on $\rho$ in contrast to what the results of \cite{GreNov} seem to suggest.

Intuitively, a L\'evy process with a higher fluctuation (i.e.\ a smaller index $\alpha$) follows a boundary easier and thus allows a larger class of boundaries where the survival exponent does not change compared to the constant boundary case. This intuition gives the main idea of the proofs.

\medskip

Second, note that the assumption $\rho\in(0,1)$ excludes the case where the stable process $Z$ is spectrally positive (resp.\ negative) with index $\alpha$. That means we assume a regularly varying left tail for the decreasing boundary and a regularly varying right tail for the increasing boundary. The regularly varying left (resp.\ right) tail with index $-\alpha$ of the L\'evy measure of $X$ is an important property to show Theorem \ref{thm: nonincreasing} (resp.\ Theorem \ref{thm: nondecreasing}). Without these assumptions our approach does not work. 

Essentially, our proof is based on transforming the moving boundary problem to the constant boundary case. For this purpose,  the regularly varying left (resp.\ right) tail is used. Hence, our proof gives some hope to be  generalized to other L\'evy processes such as processes indicated in \cite{DenShn}.

\begin{rem}\label{rem: spitzer}
 Since the process $X$ in Theorem \ref{thm: nonincreasing} (resp.\ Theorem \ref{thm: nondecreasing}) satisfies Spitzer's condition with parameter $\rho \in (0,1)$,  \cite{Rog} (or \cite{Bin})  yields
\begin{align}\label{eqn: spitzer}
 \PRO \left( X(t)  \leq 1 , \text{ } t \leq T  \right) = T^{-\rho + o(1)}, \quad \text{ as } T \to \infty.
\end{align}
This property provides immediately the upper (resp.\ lower) bound for (\ref{eqn: nonincreasing}) (resp.\ (\ref{eqn: nondecreasing})). Furthermore, note that for $\rho =1$ the equation (\ref{eqn: spitzer}) is not necessarily true (cf. \cite{DenShn}). 
\end{rem}

\begin{rem}
 An important idea for our proofs is to use a modified version of Theorem 3.1 in \cite{SupLP} to show the lower (resp.\ upper) bound of  (\ref{eqn: nonincreasing}) (resp.\  (\ref{eqn: nondecreasing})). For this purpose, the assumption  $\limsup_{t \to 0^+} \PRO ( X (t) \geq 0) <1$ in Theorem \ref{thm: nonincreasing} is required (see Corollary 3.4 in  \cite{SupLP} for more details). However, we believe that this may be of technical matter.

\end{rem}



 \begin{rem} The additional assumption in Theorem \ref{thm: nonincreasing} (Theorem \ref{thm: nondecreasing}, respectively) that the left (right, respectively) tail of the L\'evy measure have a density is required for technial purposes only.
\end{rem}

\begin{rem}\label{rem:deleted}
 Unfortunately, the case $\alpha\geq 1$ has to be excluded from Theorems~\ref{thm: nonincreasing} and~\ref{thm: nondecreasing}. In the proof of the theorem, we split the L\'evy process $X$ into two independent parts: a small subordinator part that is supposed to follow the boundary and a remainder that has the usual event of non-exiting a constant boundary. The method of proof to control the subordinator part does not seem to work when $\alpha\geq 1$, as then the subordinator has a constant (even thouh small) drift.
\end{rem}

Let us mention that related topics have been discussed like the moments (\cite{DonMal,Gut,Rot}), the finiteness (\cite{DonMal2}), and the stability (\cite{GrifMal}) of the first passage time. Random boundaries were studied in \cite{VO,PevShi}.

\medskip

We briefly recall the basic facts of L\'evy processes. A L\'evy process   $(X(t))_{t\geq0}$ possesses stationary and independent increments, c\'adl\'ag  paths, and $X(0 )= 0$  (see \cite{bertoin}, \cite{sato}). By the L\'evy-Khintchine formula, the characteristic function of a marginal of a L\'evy process $(X(t))_{t\geq 0}$ is given by
$  \ln \ER ( e^{iuX(t)} ) = t \Psi(u)$, for every $ u \in \mathbb{R}$, where
\begin{align}\label{char}
 \Psi (u) =   i b u - \frac{\sigma^2}{2} u^2+ \int_{\mathbb{R} } (e^{iux}-1 - \mathbf{1}_{\{ |x| \leq 1  \}} iux) \nu ( \dd x),
\end{align}
for parameters  $\sigma^2 \geq 0 $,  $b \in \mathbb{R}$, and a positive measure $\nu $ concentrated on $\mathbb{R}\backslash \{0\}$, called L\'evy measure, satisfying 
\begin{align*}
\int_{\mathbb{R}} (1 \wedge x^2) \nu ( \dd x) < \infty.
\end{align*}
 For a given triplet $(b, \sigma^2, \nu)$ there exists a L\'evy process $(X(t))_{t\geq 0}$ such that (\ref{char}) holds, and its distribution is uniquely determined by its triplet. For the tails of the Le\'vy measure we will write   
\begin{align*}
\nu_+ (x) = \nu \Big(  (x, \infty ) \Big) \quad  \text{ and } \quad \nu_- (x) = \nu \Big( (-\infty, -x) \Big) . 
\end{align*}

\medskip
 The proof of Theorem \ref{thm: nonincreasing}, the case of negative  boundaries, is given in Section \ref{proofthmnoninc}, whereas Section \ref{proofthmnondec} contains the proof for positive boundaries, Theorem \ref{thm: nondecreasing}. For reasons of clarity and readability some parts of the proof are separated in Section \ref{helpresult} and may be of independent interest.

{\bf Remark:} We mention the recent preprint by Denisov and Wachtel \cite{denisovwachtel}, who obtain similar results for random walks by completely different techniques.

\section{Proof of Theorem \ref{thm: nonincreasing}}\label{proofthmnoninc}
 Note that the upper bound is trivial due to Remark \ref{rem: spitzer}. Hence, the following proof is devoted to the lower bound.
 
 Let $X$ be a L\'evy process with L\'evy triplet $(b, \sigma^2, \nu)$. By assumption $\nu_- \in \RV (- \alpha)$. That means there exists a function $\ell$ slowly varying at zero such that
\begin{align*}
  \nu (\dd x) = |x|^{-\alpha-1} \ell (1/|x|) \dd x, \quad \text{ for }  x < 0.
\end{align*}
The main idea of the proof is to consider instead of the process $X$ the following two independent L\'evy processes:  Let $0\leq  \delta (T) := (\ln \ln T)^{-1} \wedge \tfrac12 \searrow 0$, for $T \rightarrow \infty$. Then, let  $ Z_T$ and $Y_T$ be L\'evy processes with characteristic exponents
\begin{align} \label{eqn: charST}
\Psi_{ Z_T} (u) :=  \int_{-\infty}^{-1} (e^{iux} -1)  \nu_{Z_T} ( \dd x), \quad u \in \mathbb{R},
\end{align}
with
\begin{align*}
  \nu_{Z_T} (\dd x) := \left\{ \begin{array}{rr} 0 , & x \geq - 1, \\
                         \delta(T) \frac{\ell (\delta (T)^{1/\alpha} /|x|) }{\ell(1/|x|)} \nu(\dd x), & x < - 1 ,
                        \end{array} \right.
\end{align*}
and
\begin{align}\label{eqn: charYT}
 \Psi_{Y_T} (u) := ibu - \frac{\sigma^2}{2}  u^2 + \int_{\mathbb{R} } (e^{iux}-1 - \mathbf{1}_{\{ |x| \leq 1  \}} iux) \nu_T (\dd x), \quad u \in \mathbb{R},
\end{align}
with 
\begin{align*}
  \nu_T (\dd x) := \left\{ \begin{array}{rr} \nu(\dd x), & x \geq - 1, \\
                         \left(1-\delta(T)\frac{\ell (\delta (T)^{1/\alpha} /|x|) }{\ell(1/|x|)} \right)\nu(\dd x), & x < - 1 .
                        \end{array} \right.
\end{align*}
Note that both  $\nu_T$  and $\nu_{Z_T}$  are L\'evy measures for every fixed $T>1$. We also denote $S_T:=-Z_T$.

Then,  $X = Y_T   - S_T $ for every fixed $T>1$ and $S_T$ is a subordinator for fixed $T>1$ by construction.

For $\lambda >0 $  sufficiently small Karamata's Theorem (see Theorem 1.5.11 in \cite{binbook}) implies  the following estimate for the Laplace exponent of $S_T$
\begin{align}\label{eqn: abslapace}
   \ER \left( e^{ - \lambda S_T (1) } \right)  &= \exp \left( - \delta (T) \int_{-\infty}^{-1}  \left( 1 - e^{- \lambda |x|} \right) \frac{\ell (\delta (T)^{1/\alpha} /|x|) }{\ell(1/|x|)} \nu (\dd x)   \right) \notag \\
 &\leq  \exp \left( - \delta (T) \int_{-\infty}^{-1/\lambda}  \left( 1 - e^{- \lambda |x|} \right) \frac{\ell (\delta (T)^{1/\alpha} /|x|) }{\ell(1/|x|)}  \nu (\dd x)   \right) \notag \\
&\leq  \exp \left( - \frac12 \delta (T) \int_{-\infty}^{-1/\lambda}  \frac{\ell (\delta (T)^{1/\alpha} /|x|) }{\ell(1/|x|)} \nu (\dd x)   \right)  \notag \\
& \leq \exp \left( - \frac{1}{4 \alpha} \delta (T) \cdot \lambda^\alpha \ell (\lambda \delta (T)^{1/ \alpha} ) \right).
\end{align}
We record this formula here since it will be need very often. We stress that the bound is uniform in $T$. The particularly involved choice of the argument in the function $\ell$ will become clear in (\ref{eqn:2step}).

The decisive idea of our proof is the following observation: Due to the independence of $S_T$  and $Y_T$ we obtain that
\begin{align*}
\PRO &\left( X(t) \leq 1-t^\gamma, \text{ } t \leq T  \right)  = \PRO \left( Y_T (t) - S_T (t) \leq 1-t^\gamma, \text{ } t \leq T  \right) \notag \\
 & \geq \PRO \left( Y_T (t)  \leq \frac12 , \text{ } t \leq T  \right) \cdot  \PRO \left( - S_T (t)  \leq \frac12 - t^\gamma , \text{ } t \leq T  \right).
\end{align*}
The theorem is proved by applying the following two lemmas.
\begin{lemma}\label{lem: constant}
 Let $T>1$ and $\alpha \in (0,1)$.  Furthermore,  let $Y_T$ be the L\'evy process  defined in (\ref{eqn: charYT}). Then, it holds, for fixed $x >0$, that 
\begin{align*}
  \PRO \left( Y_T (t)  \leq x , \text{ } t \leq T  \right) = T^{-\rho+o(1)}, \quad \text{as } T \rightarrow \infty.
\end{align*}
\end{lemma}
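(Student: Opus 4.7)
The proof splits into two inequalities. The upper bound is immediate: since $S_T$ is a nonnegative subordinator and $Y_T=X+S_T$, we have $Y_T(t)\geq X(t)$ for all $t\geq 0$, so
$$\PRO(Y_T(t)\leq x,\, t\leq T)\leq \PRO(X(t)\leq x,\, t\leq T)=T^{-\rho+o(1)},$$
by Spitzer's condition~(\ref{eqn: spitzer}), applied at level $x$; the polynomial exponent does not depend on the fixed constant $x>0$.

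For the lower bound the difficulty is that $Y_T$ is a $T$-dependent L\'evy process, so Spitzer cannot be quoted directly. My plan is to apply a modified, uniform-in-$T$ version of Theorem~3.1 in~\cite{SupLP}, as foreshadowed in the remarks of Section~\ref{mainresult}. That theorem provides a polynomial lower bound of order $T^{-\rho-\varepsilon}$ for the survival probability of a L\'evy process in $\mD(\alpha,\rho)$, with constants depending on quantities of the L\'evy triplet and the positivity parameter that can be controlled uniformly over the family $\{Y_T\}_{T>1}$.

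Two inputs have to be checked. First, $Y_T\in\mD(\alpha,\rho)$ for every $T$ large: a Karamata/Potter estimate analogous to the one used in~(\ref{eqn: abslapace}) shows that $\nu_T((-\infty,-x))=\nu_-(x)(1+o(1))$ as $x\to\infty$ uniformly in $T$, so the left tail of $\nu_T$ is regularly varying with index $-\alpha$ and the positivity parameter of $Y_T$ remains $\rho$ (the right tail is unchanged). Second, the assumption $\limsup_{t\to 0^+}\PRO(Y_T(t)\geq 0)<1$ must hold uniformly in $T$; this transfers from the hypothesis on $X$ because the L\'evy triplets of $X$ and $Y_T$ agree on small jumps and $S_T(t)\to 0$ in probability as $t\to 0$. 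Once both items are in place, the~\cite{SupLP} bound yields $\PRO(Y_T(t)\leq 0,\, t\leq T)\geq T^{-\rho-\varepsilon}$ for every $\varepsilon>0$ and $T$ large, which is stronger than the claim since $\{Y_T\leq 0\}\subseteq\{Y_T\leq x\}$.

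The main obstacle I anticipate is the uniform control of the constants in the~\cite{SupLP} estimate: they depend on the slowly varying function $\ell$ appearing in the tail of $\nu_T$ and on the convergence rate in the $Y_T$-analogue of Spitzer's condition, and must be tracked through the proof in~\cite{SupLP} to confirm that they remain bounded (and in fact converge to those associated with $X$) as $T\to\infty$. This amounts to a careful re-reading of the~\cite{SupLP} argument rather than a genuinely new ingredient.
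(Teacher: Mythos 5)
Your upper bound is exactly the paper's, and your high-level plan for the lower bound -- adapt Theorem 3.1 of \cite{SupLP} to the $T$-dependent family $\{Y_T\}$ -- is also the route the paper takes. However, there is a genuine gap precisely at the point you flag as an ``obstacle'': the claim that the constants in the \cite{SupLP} estimate ``can be controlled uniformly over the family $\{Y_T\}_{T>1}$'' is the entire content of the lemma, and your two proposed checks do not deliver it. Knowing that each $Y_T$ lies in $\mD(\alpha,\rho)$ with the same $\rho$, and that $\limsup_{t\to0^+}\PRO(Y_T(t)\ge0)<1$ uniformly, does not control the implicit constants in $\PRO(M_T(T)<x)\approx \kappa_T(1/T,0)V_T(x)$: those constants depend on the rate at which $\PRO(Y_T(t)\ge0)\to\rho$ (equivalently, on the speed of convergence of $\kappa_T$ to its regularly-varying asymptotics), and a priori this rate could degrade with $T$. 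Citing \cite{SupLP} as a black box therefore cannot close the argument; one must open up the proof.

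What the paper actually does, and what your proposal omits, is a quantitative comparison of $\kappa_T$ with the fixed exponent $\kappa$ of $X$. Via Frullani's integral,
\[
\kappa_T(a,0)=\kappa(a,0)\exp\Bigl(\int_0^\infty(e^{-t}-e^{-at})t^{-1}\bigl(\PRO(Y_T(t)\ge0)-\PRO(X(t)\ge0)\bigr)\,\dd t\Bigr),
\]
so the task reduces to showing that $\PRO(Y_T(t)\ge0)-\PRO(X(t)\ge0)\le C\,\delta(T)^{1/6}$ \emph{uniformly in $t$}. This is nontrivial: for small $t$ it follows from the explicit $\PRO(S_T(t)=0)$, while for large $t$ one needs Stone's local limit theorem for $X$ together with the large-deviation tail bound for $S_T$ in Lemma~\ref{lem:ST} (via Remark~\ref{rem:ST}), which in turn rests on the regular-variation structure built into $\nu_{Z_T}$ and the result of \cite{DDS08}. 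Only after this bound is in place can one substitute into the \cite{SupLP} inequality (3.7), combine with Potter's theorem and a carefully tuned $z=z(T)=T^{-1-|o(1)|}$, and extract $T^{-\rho+o(1)}$. In short, the ``careful re-reading'' you defer is not constant-tracking but a new uniform estimate requiring local limit theory and large deviations; without it the lower bound is unproved.
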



\begin{lemma}\label{lem: alpha01}
 Let $N>1$ and $\alpha \in (0,1)$.  Furthermore, let $S_N$ be a subordinator whose Laplace transform satisfies (\ref{eqn: abslapace}) for  $\lambda >0$ sufficiently small. Then, it holds, for $0< \gamma <1/\alpha$,  that 
\begin{align*}
   \PRO \left( S_N (t)  \geq - \frac12 + t^\gamma , \text{ } 0 \leq  t \leq N  \right) =  N^{o(1)}, \quad \text{as } N \rightarrow \infty.
\end{align*}
\end{lemma}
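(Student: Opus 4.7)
The plan is to discretize $[0, N]$ geometrically, to replace the constraints on $S_N$ at each dyadic point by independent constraints on the corresponding increments, and to bound each factor through \eqref{eqn: abslapace} together with an optimized Chernoff inequality.

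First I exploit that $S_N$ is a subordinator with $S_N(0) = 0$: on any interval $[s_{k-1}, s_k]$ the bound $S_N(t) \geq t^\gamma - \tfrac12$ is implied by the single requirement $S_N(s_{k-1}) \geq s_k^\gamma - \tfrac12$. I choose $s_0 := 2^{-1/\gamma}$, $s_1 := 1$, and $s_k := 2^{k-1}$ for $k \geq 2$. The condition is then automatic on $[0, s_0]$, and it suffices to guarantee $\{S_N(s_0) \geq \tfrac12\} \cap \bigcap_{k \geq 1}\{S_N(s_k) \geq s_{k+1}^\gamma\}$. Setting $\xi_k := S_N(s_k) - S_N(s_{k-1})$ (with $\xi_0 := S_N(s_0)$), $M_0 := \tfrac12$, and $M_k := s_{k+1}^\gamma$ for $k \geq 1$, the events $\tilde A_k := \{\xi_k \geq M_k\}$ depend on disjoint time intervals of a L\'evy process and are therefore independent; since $S_N(s_k) \geq \xi_k$, their intersection implies the target event. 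Hence
\[
\PRO\Big(S_N(t) \geq t^\gamma - \tfrac12,\ 0 \leq t \leq N\Big) \;\geq\; \prod_{k=0}^{K} \PRO(\tilde A_k), \qquad K = O(\log N).
\]

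Next I lower-bound each factor by a Chernoff argument built on \eqref{eqn: abslapace}. For any sufficiently small $\lambda > 0$,
\[
\PRO(\tilde A_k^c) \;\leq\; \exp\Big(\lambda M_k - c\,(s_k - s_{k-1})\,\delta(N)\,\lambda^\alpha\,\ell\big(\lambda\,\delta(N)^{1/\alpha}\big)\Big).
\]
Optimization yields the critical scale $\lambda_k^\ast \asymp ((s_k - s_{k-1})\delta(N)\ell/M_k)^{1/(1-\alpha)}$, which tends to $0$ as $N \to \infty$ and thus stays in the regime where \eqref{eqn: abslapace} applies. I then split into two parts. For $k \geq k_\ast := \lceil C\log\log\log N\rceil$ with $C$ large, using $1 - \gamma\alpha > 0$ the optimized exponent will be at most $-2\log\log N$, so $\PRO(\tilde A_k^c) \leq (\log N)^{-2}$; since this decays super-geometrically in $k$, $\sum_{k \geq k_\ast} \log(1/\PRO(\tilde A_k))$ will be $O(1)$. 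For $k < k_\ast$, plugging $\lambda_k^\ast$ back in gives $\PRO(\tilde A_k) \geq c\,\delta(N)^{1/(1-\alpha)}\ell(\cdot)^{1/(1-\alpha)}$, which is $N^{-o(1)}$ because $\delta(N) = (\log\log N)^{-1}$ decays slower than any polynomial; and since only $O(\log\log\log N)$ values of $k$ fall in this range, $\sum_{k < k_\ast} \log(1/\PRO(\tilde A_k))$ will be $O((\log\log\log N)^2) = o(\log N)$.

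Combining both regimes, $\prod_{k=0}^K \PRO(\tilde A_k) \geq \exp(-o(\log N)) = N^{-o(1)}$, as required. The hard part will be controlling the slowly varying factor $\ell$ uniformly in $k$ and $N$: Potter-type bounds are needed to compare $\ell(\delta(N)^{1/\alpha}/M_k)$ with $\ell(\delta(N)^{1/\alpha})$ up to factors that are negligible at the level of $\log(1/\PRO)$. The particular choice of the argument $\lambda\,\delta(N)^{1/\alpha}$ of $\ell$ in \eqref{eqn: abslapace}, which the authors flag as peculiar immediately after that bound, is precisely what will allow the two regimes to align cleanly at the critical scale $\lambda_k^\ast$.
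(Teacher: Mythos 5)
Your proposal is correct in outline but takes a genuinely different route than the paper. The paper discretizes at \emph{integer} times, invokes \emph{association} of the subordinator twice (once to peel off the block $n\le N_1$ with $N_1\asymp(\ln\ln N)^{4/(1-\gamma\alpha-\eps)}$, once more to split that block into a product of marginals), proves via a union bound (Lemma~\ref{lem: n0N}) that the tail block has probability $\to 1$, and then lower-bounds each of the $N_1$ marginal factors by roughly $(\ln\ln N)^{-c}$. You instead discretize \emph{geometrically} and use only the \emph{independence of increments}: since $S_N(s_k)\ge \xi_k$, the target event is implied by $\bigcap_k\{\xi_k\ge M_k\}$, whose probability factorizes exactly. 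This is a simplification — no appeal to association is needed — and it is quantitatively stronger: your lower bound is $\exp(-O((\log\log\log N)^2))$, versus the paper's $\exp(-O((\log\log N)^{C}\cdot\log\log\log N))$. Both are $N^{o(1)}$, so nothing is gained at the level of the stated lemma, but your decomposition is the cleaner one.

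There is one genuine (though easily repairable) gap. You assert that $\lambda_k^\ast\asymp\bigl((s_k-s_{k-1})\delta(N)\ell/M_k\bigr)^{1/(1-\alpha)}$ ``tends to $0$ as $N\to\infty$ and thus stays in the regime where (\ref{eqn: abslapace}) applies.'' This is true uniformly in $k$ only when $\gamma\ge 1$. When $\gamma<1$, the ratio $(s_k-s_{k-1})/M_k\asymp 2^{k(1-\gamma)}$ grows with $k$, and for $k$ ranging up to $K\asymp\log_2 N$ the optimizer $\lambda_k^\ast$ leaves the fixed small-$\lambda$ window in which (\ref{eqn: abslapace}) holds (it can be as large as $\asymp(N^{1-\gamma}\delta(N))^{1/(1-\alpha)}$). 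The fix: cap $\lambda$ at a fixed small $\lambda_0$ for those $k$. With $\lambda=\lambda_0$ the exponent becomes $\lambda_0 M_k - c(s_k-s_{k-1})\delta(N)\lambda_0^\alpha\,\ell$, and since $\gamma<1$ forces $M_k/(s_k-s_{k-1})\to0$, this is eventually $\le -\tfrac{c}{2}2^{k}\delta(N)\lambda_0^\alpha\ell$, which is again $\le -2\log\log N$ once $k\gtrsim\log\log\log N$ and still decays super-geometrically in $k$; so both of your estimates (the $O(1)$ tail sum and the choice of $k_\ast$) survive. You should also make explicit the Potter-bound step controlling $\ell$ across the $k$-range (as you flag at the end), and note that the ``sufficiently small $\lambda$'' threshold in (\ref{eqn: abslapace}) is uniform in $N$, which is what makes the capping legitimate; the paper sidesteps both issues by choosing $\lambda=\lambda(n,N)\to 0$ explicitly.
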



 Lemmas \ref{lem: constant} and \ref{lem: alpha01} are  proved in Section \ref{helpresult} using, among others, inequality (\ref{eqn: abslapace}).


\section{Proof of Theorem \ref{thm: nondecreasing}}\label{proofthmnondec}
Contrary to the proof of Theorem \ref{thm: nonincreasing}  the lower bound is trivial due to Remark \ref{rem: spitzer}. Hence, the following proof is devoted to the upper bound, where the idea of the proof is essentially the same as for the lower bound of Theorem \ref{thm: nonincreasing}. 

Let $X$ be a L\'evy process with L\'evy triplet $(b, \sigma^2, \nu)$. By assumption we have
\begin{align*}
 \nu (\dd x) =  x^{-\alpha-1} \ell (1/x) \dd x, \quad x > 0 
\end{align*}
where $\ell$ is a slowly varying at zero function. 
Again, the main idea of the proof is to consider instead of the process $X$ the following two independent L\'evy processes:  Let $0\leq  \delta (T) := (\ln \ln T)^{-1} \wedge \tfrac12 \searrow 0$, for $T \rightarrow \infty$. Then,  let  $S_T$ and $Y_T$ be L\'evy processes with characteristic exponents
\begin{align}  \label{eqn: charST1}
\Psi_{S_T} (u):= \int_{1}^{\infty} (e^{iux} -1)  \nu_{S_T} ( \dd x), \quad u \in \mathbb{R},
\end{align}
with
\begin{align*}
 \nu_{S_T} (\dd x ) := \left\{ \begin{array}{rr} 0, & x \leq  1, \\
                         \delta(T) \frac{\ell (\delta (T)^{1/\alpha} /x) }{\ell(1/x)}  \nu(\dd x), & x > 1 ,
                        \end{array} \right.
\end{align*}
and
\begin{align}\label{eqn: charYT1}
\Psi_{Y_T} (u) := ibu - \frac{\sigma^2}{2}  u^2 + \int_{\mathbb{R} } (e^{iux}-1 - \mathbf{1}_{\{ |x| \leq 1  \}} iux) \nu_T (\dd x), \quad u \in \mathbb{R},
\end{align}
with 
\begin{align*}
  \nu_T (\dd x) := \left\{ \begin{array}{rr} \nu(\dd x), & x \leq  1, \\
                         (1-\delta(T) \frac{\ell (\delta (T)^{1/\alpha} /x) }{\ell(1/x)}  )\nu(\dd x), & x > 1 .
                        \end{array} \right.
\end{align*}
Note that both $\nu_T$ and $ \nu_{S_T}$ are L\'evy measures for every fixed $T>1$.
Thus, $X = Y_T  +S_T $ for every $T>0$. The process $S_T$ is a subordinator by construction.

Since $0<\alpha \gamma <1$ there exists a constant $\eps >0$ such that $\gamma \alpha + \eps < 1$ and $\gamma \alpha - \eps >0$.
 Moreover, define $T_0 := \lfloor (\ln   T  )^{\frac{3}{1-\alpha \gamma - \eps }} \rfloor$. Then, we obtain the following estimate
\begin{align}\label{eqn: goalnondecr}
\PRO \left( X(t) \leq 1 + t^\gamma, \text{ } t \leq T  \right) & = \PRO \left( Y_T (t) + S_T (t) \leq 1 + t^\gamma, \text{ } t \leq T  \right) \notag \\
 & \leq \PRO \left( Y_T (n) + S_T (n) \leq 1 + n^\gamma, \text{ } \forall n = T_0 ,..., \lfloor T\rfloor  \right) \notag \\
 & \leq \PRO \left(\left\{ Y_T (n)  \leq 1 +n^\gamma - S_T (n) ,\text{ } \forall n = T_0 ,..., \lfloor T\rfloor \right\} \right. \notag \\
&\quad \quad \left.  \cap \left\{  n^\gamma - S_T (n) \leq 0 ,\text{ } \forall n = T_0 ,..., \lfloor T\rfloor \right\}  \right)\notag \\
&\quad +   \PRO \left( \exists n \in \left\{ T_0 ,..., \lfloor T\rfloor \right\} : S_T (n)  <  n^\gamma  \right) \notag \\
 & \leq \PRO \left( Y_T (n)  \leq 1 ,\text{ } \forall n = T_0 ,..., \lfloor T\rfloor   \right)\notag \\
&\quad +   \PRO \left( \exists n \in \left\{ T_0 ,..., \lfloor T\rfloor \right\} : S_T (n)  <  n^\gamma  \right).
\end{align}
 On the one hand, for $\lambda >0 $  sufficiently small Karamata's Theorem (see Theorem 1.5.11 in \cite{binbook}) implies  the following estimate for the Laplace exponent of $S_T$
\begin{align}\label{eqn: abslaplace1}
   \ER \left( e^{ - \lambda S_T (1) } \right)  &= \exp \left( - \delta (T) \int_1^\infty  \left( 1 - e^{- \lambda x} \right)  \frac{\ell (\delta (T)^{1/\alpha} /x) }{\ell(1/x)} \nu (\dd x)   \right)  \notag \\
&\leq \exp \left( - \delta (T) \int_{1/\lambda}^\infty  \left( 1 - e^{- \lambda x} \right)  \frac{\ell (\delta (T)^{1/\alpha} /x) }{\ell(1/x)} \nu (\dd x)   \right)  \notag \\
&\leq \exp \left( - \frac12 \delta (T) \int_{1/\lambda}^\infty  \frac{\ell (\delta (T)^{1/\alpha} /x) }{\ell(1/x)}  \nu (\dd x)   \right) \notag \\
& \leq \exp \left( -\frac{1}{4\alpha}  \delta (T) \cdot \lambda^\alpha \ell (\lambda \delta (T)^{1/\alpha} ) \right).
\end{align}
Then, Chebyshev's inequality gives, for $T>1$ sufficiently large,
 \begin{align}
  \PRO \left( \exists n \in \left\{ T_0,..., \lfloor T\rfloor \right\} : S_T (n)  <  n^\gamma  \right) &\leq \sum_{n= T_0}^{\lfloor T\rfloor}   \PRO \left(   S_T (n )  <  n^\gamma  \right) \notag \\
 &= \sum_{n= T_0}^{\lfloor T\rfloor}   \PRO \left(  e^{-n^{-\gamma}  S_T (n )}  \geq   e^{-1} \right)  \notag  \\
 &\leq  \sum_{n= T_0}^{\lfloor T\rfloor}    e^{1 -\tfrac{1}{4\alpha} n^{1-\alpha \gamma} \ell (n^{-\gamma} \delta(T)^{1/ \alpha}) \delta (T) } 
 \end{align}
 Proposition 1.3.6 in \cite{binbook}  implies $\ell(\lambda) \geq \lambda^{\eps / \gamma} $, for $\lambda>0$ sufficiently small, and thus we get for $T>1$ sufficiently large  
 \begin{align}\label{eqn:estimateST}
\PRO \left( \exists n \in \left\{ T_0,..., \lfloor T\rfloor \right\} : S_T (n)  <  n^\gamma  \right) &\leq  \sum_{n= T_0}^{\lfloor T\rfloor}     e^{1 -\tfrac{1}{4\alpha} T_0^{1-\alpha \gamma- \eps}  \delta (T)^{1+ \eps / (\alpha \gamma) } }  \notag  \\
 &\leq     e^{1+ \ln \lfloor T\rfloor - \tfrac{1}{4\alpha} (\ln \lfloor T\rfloor )^3  \delta (T)^{1+\eps/ (\alpha \gamma)  }}  \notag  \\
 &\leq      e^{-  (\ln \lfloor T\rfloor )^2 }  \notag  \\
 &\leq T^{-\rho + o(1)},
 \end{align}
where we used  that $\alpha \gamma - \eps >0$ in the second last step.

On the other hand, using the fact that the process $Y_T$ is associated (cf.\ \cite{ass} or \cite{AKS}, Lemma 14) implies
\begin{align*}
 \PRO  \left( Y_T (n)  \leq 1 ,\text{ } \forall n = T_0 ,..., \lfloor T\rfloor   \right)  &\leq \frac{\PRO \left( Y_T (n)  \leq 1 ,\text{ } \forall n =  0 ,..., \lfloor T\rfloor  \right) }{\PRO \left( Y_T (n)  \leq 1 ,\text{ } \forall n = 0,..., T_0    \right) }.
\end{align*}
Note that $S_T \geq 0$ a.s.\  since $S_T$ is a subordinator.  Hence, due to $X=Y_T+S_T$ and Remark \ref{rem: spitzer} we obtain that 
\begin{equation}\label{eqn:estimateYT}
\PRO \left( Y_T (n)  \leq 1 ,\text{ } \forall n = 0,..., T_0    \right) \geq  \PRO \left( X (n)   \leq 1 , \text{ } \forall n = 0,..., T_0   \right) =T_0^{-\rho+o(1)} .
\end{equation}
Now, setting (\ref{eqn:estimateYT}) and  (\ref{eqn:estimateST})  in  (\ref{eqn: goalnondecr}) gives
\begin{align*}
 \PRO  \left( Y_T (n)  \leq 1 ,\text{ } \forall n =  T_0 ,..., \lfloor T\rfloor   \right) & \leq  T^{- \rho + o(1)} + \PRO \left( Y_T (n)  \leq 1 ,\text{ } \forall n =  0 ,..., \lfloor T\rfloor  \right) \cdot T^{o(1)} 
\end{align*}
The theorem is proved by applying the following lemma:
\begin{lemma}\label{lem: constant1}
 Let $T>1$ and $\alpha \in (0,1)$. Furthermore,  let $Y_T$ be a L\'evy process defined in (\ref{eqn: charYT1}). Then, for $x >0$
\begin{align*}
  \PRO  \left( Y_T (n)  \leq x ,\text{ } \forall n =  1,..., \lfloor T \rfloor   \right)  \leq T^{-\rho+o(1)}, \quad \text{as } T \rightarrow \infty.
\end{align*}
\end{lemma}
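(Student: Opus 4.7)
The lemma concerns the modified L\'evy process $Y_T$, whose L\'evy measure $\nu_T$ agrees with $\nu$ on $(-\infty,1]$ and on $(1,\infty)$ differs from $\nu$ by the multiplicative factor $1-\delta(T)\ell(\delta(T)^{1/\alpha}/x)/\ell(1/x)$, which tends to $1$ as $T\to\infty$ since $\delta(T)\to 0$. My plan is to show that this mild perturbation of the right jumps does not shift the survival exponent: I would check that $Y_T$ lies in $\mD(\alpha,\rho_T)$ for some $\rho_T$ close to $\rho$, and then apply a Rogozin-type estimate uniformly in $T$ to the integer skeleton $(Y_T(n))_{n\in\mathbb{N}}$.

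For the domain-of-attraction step, Karamata's theorem applied to $\nu_T$ gives $\nu_T((x,\infty))\sim(1-\delta(T))\nu((x,\infty))$ as $x\to\infty$, while $\nu_T((-\infty,-x))=\nu((-\infty,-x))$ is unchanged. Both tails are regularly varying with index $-\alpha$, and the asymptotic ratio differs from that of $\nu$ only by the factor $1-\delta(T)$, so $Y_T\in\mD(\alpha,\rho_T)$ for some $\rho_T$ with $|\rho_T-\rho|=O(\delta(T))=O(1/\ln\ln T)$. In particular $T^{\rho-\rho_T}=T^{o(1)}$, and $Y_T$ satisfies Spitzer's condition with parameter $\rho_T$.

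Next, since $(Y_T(n))$ is a random walk with step distribution in the $\alpha$-stable domain of attraction and positivity parameter $\rho_T$, Rogozin's theorem yields, for each fixed $T$, $\PRO(Y_T(n)\leq x,\forall n\leq N)=L_T(N)N^{-\rho_T}$ for some slowly varying $L_T$. The main obstacle is to obtain a \emph{uniform}-in-$T$ estimate $L_T(T)\leq T^{o(1)}$, since classical Rogozin is quantitative only for a fixed process whereas here the process itself varies with $T$. This is precisely where the modified Theorem~3.1 of \cite{SupLP} alluded to in the paper should enter, furnishing quantitative upper bounds on the Rogozin prefactor that remain effective under the mild $T$-dependence of the triplet. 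Combined with $T^{\rho-\rho_T}=T^{o(1)}$ from the previous step, this yields the stated bound.

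One might hope for a simpler shortcut via the coupling $X=Y_T+S_T$, splitting on $\{S_T(T)\leq M_T\}$, but this fails: the estimate $\PRO(X(n)\leq x+M_T,\forall n\leq T)\leq T^{-\rho+o(1)}$ (available from Remark~\ref{rem: spitzer}) forces $M_T\leq T^{o(1)}$, whereas any stable-type tail bound on the subordinator through (\ref{eqn: abslaplace1}) requires $M_T$ to be at least a positive power of $T$. These constraints are incompatible, so the uniform Doney-type analysis of $Y_T$ itself seems unavoidable and constitutes the main technical step of the proof.
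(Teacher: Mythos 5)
Your proposal correctly identifies where the difficulty lies (the process $Y_T$ changes with $T$, so one cannot simply invoke Rogozin's theorem at time $N=T$), but it stops exactly at that point: ``this is precisely where the modified Theorem 3.1 of \cite{SupLP}\ \ldots\ should enter, furnishing quantitative upper bounds on the Rogozin prefactor.'' That sentence is the entire technical content of the lemma, and it is not supplied. The route you sketch --- first show $Y_T\in\mD(\alpha,\rho_T)$ with $\rho_T\to\rho$, then control a $T$-dependent slowly varying factor $L_T$ --- is also not the paper's route, and it is not clear it can be made to work, because the Rogozin asymptotic $L_T(N)N^{-\rho_T}$ is a statement as $N\to\infty$ for \emph{fixed} $T$ and offers no a priori control on how $L_T(T)$ behaves as $T$ grows.

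What the paper actually does bypasses the positivity parameter $\rho_T$ altogether. After using Proposition~2.4 and the fluctuation identity from \cite{SupLP}/\cite{bertoin} to reduce to bounding $\kappa_T(1/T,0)\,V_T(x)$, it writes the ratio $\kappa_T(1/T,0)/\kappa(1/T,0)$ explicitly via the Frullani-type formula as
$\exp\bigl(\sum_{n\geq 1}(e^{-n/T}-e^{-n})n^{-1}(\PRO(X(n)\geq 0)-\PRO(Y_T(n)\geq 0))\bigr)$,
and shows that the differences of positivity probabilities are $O(\delta(T)^{1/6})$ uniformly in $n$. This is done precisely through the coupling $X=Y_T+S_T$, used pointwise: $\PRO(X(n)\geq 0)-\PRO(Y_T(n)\geq 0)=\PRO(0\leq X(n)<S_T(n))$, which is small either because $S_T(n)$ is unlikely to be large (Lemma~\ref{lem:ST}, a large deviation bound) or because the local limit theorem for $X(n)$ bounds the probability of falling in a short interval. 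Combined with Lemma~\ref{lem: VT} for $V_T(x)\leq 2V(x)$, this gives $\kappa_T(1/T,0)\leq \kappa(1/T,0)\,T^{C\delta(T)^{1/6}}=T^{-\rho+o(1)}$.

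Your final paragraph, arguing that ``the coupling $X=Y_T+S_T$ fails,'' dismisses a different and indeed unworkable use of the coupling (a global union bound on $\{S_T(T)\leq M_T\}$); it does not address the pointwise, per-$n$ use inside the Laplace-exponent comparison, which is where the coupling actually pays off. So the claim that ``the uniform Doney-type analysis of $Y_T$ itself seems unavoidable'' is misleading: the paper never analyzes $Y_T$ as a stable-domain process in its own right, but rather transfers everything back to the known process $X$ via the coupling. In summary: correct intuition about the obstacle, but the key estimate is assumed rather than proved, and the proposed alternative route is neither carried out nor clearly viable.
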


\section{First passage times of a time-dependent L\'evy process}\label{helpresult}
In the first section, we briefly recall the basic notations of fluctuation theory for L\'evy processes. Furthermore, we give some properties of  $Y_T$ and $S_T$ defined in (\ref{eqn: charYT}) and  (\ref{eqn: charST}) (resp.\  (\ref{eqn: charYT1}) and  (\ref{eqn: charST1})). In the subsequent section, we prove Lemma \ref{lem: constant} and \ref{lem: constant1}.  Section \ref{sec:sub} provides the proof of Lemma \ref{lem: alpha01}.

\subsection{Preliminaries and Notations}

For the notation of this section, we refer to \cite{bertoin} or \cite{kyp}, Section 6.4. Let $L$ be the local time of a general L\'evy process $X$ reflected at its supremum $M $ and denote by $L^{-1}$ the right-continuous inverse of $L$, the inverse local time. This is a (possibly killed) subordinator, and $H(s) := X(L^{-1}(s))$ is another (possibly killed) subordinator called ascending ladder height process. The Laplace exponent  of the (possibly killed) bivariate subordinator $(L^{-1}(s),H(s))$ $(s \leq L(\infty))$ is denoted by $\kappa (a,b)$,
\begin{align}\label{eqn:laplaceexpk}
\kappa (a,b) = c \exp \left( \int_0^\infty \int_{[0,\infty)} (e^{-t} - e^{-at-bx}) t^{-1} \PRO (X (t) \in \dd x) \dd t  \right),
\end{align}
where $c$ is a normalization constant of the local time. Since our results are not effected by the choice of $c$ we assume $c=1$. 

Following \cite{bertoin}, we define the renewal function of the process $H$ by
\begin{align}\label{eqn:renewal}
V(x) = \int_0^\infty \PRO (H(s) <x) \dd s
\end{align}
and for $z \geq 0$
\begin{align*}
V^z (x) = \ER \left( \int_0^\infty e^{-zt} \mathbf{1}_{[0,x)} (M(t)) \dd L(t) \right).
\end{align*}

Until further notice, we denote by $\kappa_T$ the Laplace exponent of the inverse local time $L_T^{-1}$ of $Y_T$ defined in (\ref{eqn: charYT}) (resp.\ (\ref{eqn: charYT1})) and $V_T$ the renewal function of the ladder height process $H_T$ of $Y_T$. 
Furthermore, denote by $L^{-1}$ the inverse local time of $X$ defined in Theorem \ref{thm: nonincreasing} (resp.\ Theorem \ref{thm: nondecreasing}) and $H$ be the corresponding ladder height process. Let $\kappa$ be the Laplace exponent of $(L^{-1},H)$. 

The next lemma shows the convergence of the renewal function $V_T$ to $V$.
\begin{lemma}\label{lem: VT}
Let $T>1$. Then, for all $x>0$ that are points of continuity of $V$, we have 
\begin{align*}
\lim_{T \to \infty} V_T (x) = V(x). 
\end{align*}
\end{lemma}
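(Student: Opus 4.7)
The strategy is to pass through the bivariate Laplace exponent $\kappa_T(0,b)$ of the ascending ladder process $(L_T^{-1}, H_T)$, using the standard identity
\begin{align*}
\int_0^\infty e^{-bx}\, \dd V(x) = \frac{1}{\kappa(0,b)}, \qquad b>0,
\end{align*}
(see \cite{bertoin}, Chapter VI) together with the analogous relation for $V_T$ and $\kappa_T$. If I can show $\kappa_T(0,b) \to \kappa(0,b)$ for every $b>0$, then the Laplace transforms of $\dd V_T$ converge pointwise to that of $\dd V$, so $\dd V_T \to \dd V$ vaguely on $[0,\infty)$ by the continuity theorem, and hence $V_T(x) \to V(x)$ at every continuity point of $V$.

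\textbf{Step 1 (convergence of marginals).} First I would establish $Y_T(t)\Rightarrow X(t)$ for each fixed $t>0$. Indeed, $Y_T$ and $X$ share the same drift and Gaussian component, and their L\'evy measures differ only through the factor
\begin{align*}
1 - \delta(T)\,\frac{\ell(\delta(T)^{1/\alpha}/|x|)}{\ell(1/|x|)}
\end{align*}
on $|x|>1$. Since $\delta(T)\to 0$ and, by the uniform convergence theorem for slowly varying functions, the above ratio stays bounded on $|x|\ge 1$, dominated convergence in the L\'evy--Khintchine integral gives $\Psi_{Y_T}(u)\to \Psi_X(u)$, and L\'evy's continuity theorem yields the marginal convergence.

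\textbf{Step 2 (convergence of $\kappa_T$).} Plugging this into the representation \eqref{eqn:laplaceexpk} with $a=0$, I would argue
\begin{align*}
\kappa_T(0,b) = \exp\!\left( \int_0^\infty \frac{1}{t}\, \ER\!\bigl[ e^{-t} - e^{-b Y_T(t)^+} \bigr] \, \dd t \right) \longrightarrow \kappa(0,b), \qquad b>0.
\end{align*}
The inner integrand converges pointwise in $t$ by Step~1. For dominated convergence one must handle the small-$t$ singularity via $|1-e^{-b Y_T(t)^+}| \le 1 \wedge b\, Y_T(t)^+$ together with a $T$-uniform bound on $\ER[Y_T(t)^+ \wedge 1]/t$ as $t\to 0$, and the large-$t$ tail, where $e^{-t}$ decays and the subtracted term is bounded. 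The uniform-in-$T$ control is exactly what the slow-variation construction of $\nu_T$ was designed to provide; the estimate \eqref{eqn: abslapace} and the fact that $\delta(T)\le 1/2$ give the required majorants. The main obstacle I anticipate is precisely this uniform integrability near $t=0$, since the modification of the L\'evy measure, although vanishing, affects small-time fluctuations; the slow-variation bound together with the boundedness of the modification factor is essential to avoid having this step blow up.

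Step~3 then follows as described above: by the continuity theorem for Laplace transforms of $\sigma$-finite measures on $[0,\infty)$, vague convergence $\dd V_T \to \dd V$ holds, hence $V_T(x) = \dd V_T([0,x)) \to \dd V([0,x)) = V(x)$ for every $x>0$ at which $V$ is continuous, as claimed.
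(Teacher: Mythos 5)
Your proposal follows the same route as the paper: both pass to the bivariate Laplace exponent $\kappa_T(0,\lambda)$ via its representation over the marginal laws of $Y_T(t)$, use the marginal convergence $Y_T(t)\Rightarrow X(t)$ obtained from the L\'evy--Khintchine/continuity theorem, and then invoke $\int e^{-\lambda x}\,V_T(\dd x)=\kappa_T(0,\lambda)^{-1}$ together with the Laplace-transform continuity theorem to conclude $V_T(x)\to V(x)$ at continuity points. You flag the dominated-convergence step for the $\dd t/t$ integral explicitly, which the paper's proof passes over in silence; that extra care is welcome and does not change the argument.
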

\begin{proof}
 The Continuity Theorem (cf.\  \cite{feller}, Theorem XIII.1.2)  gives $Y_T (s) \overset{d}{\longrightarrow} X(s)$, as $T \to \infty$, for all $s \geq 0$.  
Since $e^{-\lambda x}$ is bounded for all $x,\lambda \geq 0$,  Theorem VIII.1.1 in \cite{feller} implies that 
$$
e^ {-s \kappa_T(0,\lambda)} = \ER  \left( e^{- \lambda H_T (s)} \right) \longrightarrow  \ER \left( e^{- \lambda H (s)} \right)=  e^{-s \kappa (0,\lambda)}.
$$
Since the Laplace transform of the measure $V_T(\dd x)$ is $\kappa_T(0,\lambda)$:
$$
\int_{[0,\infty)} e^{-\lambda x} V_T(\dd x) = \int_0^\infty \int_{[0,\infty)} e^{-\lambda x} \PRO_{H_T(s)}( \dd x) \dd s = \int_0^\infty e^{- s \kappa_T(0,\lambda)} \dd s = \kappa_T(0,\lambda)^{-1},
$$
and the Laplace transform of $V(\dd x)$ is $\kappa(0,\lambda)$, respectively, this already shows the assertion.
\end{proof}

The next lemma characterises the tail behaviour of $S_T$ defined in (\ref{eqn: charST1}).
\begin{lemma}\label{lem:ST} Let $\alpha<1$.  Let $T>1$ and $S_T$ be the subordinator defined in (\ref{eqn: charST1}). Let $c$ be the norming sequence of $X$. Then,  there exists a constant $C>0$ such that for all integer $t > \delta (T)^{-1/2}$ and $T$ sufficiently large
\begin{align}\label{eqn:ST}
 \PRO ( S_T (t) > c(t) \delta (T)^{\frac{1}{2 \alpha}} ) \leq C \delta (T)^{\frac13}.
\end{align}
\end{lemma}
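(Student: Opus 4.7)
The Laplace bound~(\ref{eqn: abslaplace1}) controls only the \emph{lower} tail of $S_T$, so it cannot be used here; instead, the natural approach is a direct truncation at jump size $a:=c(t)\delta(T)^{1/(2\alpha)}$, combined with the explicit form of $\nu_{S_T}$ and the scaling of the norming sequence $c$.

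First I would split $S_T=S_T'+S_T''$ into the independent subordinators collecting the jumps in $(1,a]$ and in $(a,\infty)$, respectively. Then
\[
\PRO(S_T(t)>a)\leq \PRO(S_T''(t)>0)+\PRO(S_T'(t)>a),
\]
the first term is at most $t\,\nu_{S_T}((a,\infty))$, and by Markov's inequality the second is at most $\ER(S_T'(t))/a$. Since $0<\alpha<1$, Karamata's theorem (Theorem 1.5.11 in \cite{binbook}), used together with Potter's bound to make the constants uniform in $T$, gives
\[
\nu_{S_T}((a,\infty))\;\lesssim\;\delta(T)\,a^{-\alpha}\ell(\delta(T)^{1/\alpha}/a),\qquad \ER(S_T'(t))\;\lesssim\;t\,\delta(T)\,a^{1-\alpha}\ell(\delta(T)^{1/\alpha}/a),
\]
so both contributions have the same order and
\[
\PRO(S_T(t)>a)\;\lesssim\;t\,\delta(T)\,a^{-\alpha}\ell(\delta(T)^{1/\alpha}/a).
\]

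Next I would convert the right-hand side into a power of $\delta(T)$. Plugging in $a^{-\alpha}=c(t)^{-\alpha}\delta(T)^{-1/2}$ and using the fact that $X\in\mD(\alpha,\rho)$ with right-tail L\'evy density $x^{-\alpha-1}\ell(1/x)$ forces the normalization $t\,c(t)^{-\alpha}\ell(1/c(t))\to K$ for some $K>0$, one obtains
\[
\PRO(S_T(t)>a)\;\lesssim\;\delta(T)^{1/2}\cdot\frac{\ell(\delta(T)^{1/(2\alpha)}/c(t))}{\ell(1/c(t))}.
\]
A second application of Potter's inequality then bounds this ratio: for every prescribed $\eps>0$ and $T$ large enough, $\ell(\delta(T)^{1/(2\alpha)}/c(t))/\ell(1/c(t))\leq C_\eps\,\delta(T)^{-\eps/(2\alpha)}$. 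Choosing $\eps=\alpha/3$ produces the exponent $\tfrac12-\tfrac16=\tfrac13$ and hence the claim.

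\textbf{Main obstacle.} The hardest part is the bookkeeping of the uniformity. The argument $\delta(T)^{1/\alpha}/x$ inside $\ell$ depends on $T$ and tends to $0$, so Karamata and Potter must be invoked uniformly rather than at a single limit; moreover, the constant $C$ in the target inequality must not depend on $t$ or $T$. The hypothesis $t>\delta(T)^{-1/2}$ is precisely what guarantees that $a=c(t)\delta(T)^{1/(2\alpha)}$ is large enough, in the slowly varying sense, for these uniform estimates to apply, and verifying this carefully is the bulk of the work.
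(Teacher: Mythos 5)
Your truncation-plus-Markov argument is correct and would prove the lemma, but it is a genuinely different route from the paper's. The paper instead embeds $S_T$ into a larger subordinator $\tilde S_T$ that collects all jumps of size at least $\delta(T)^{1/\alpha}$ with the scaled L\'evy density $\delta(T)x^{-\alpha-1}\ell(\delta(T)^{1/\alpha}/x)$, observes the exact scaling identity $(\tilde S_T(t)/\delta(T)^{1/\alpha})_{t\ge0}\overset{d}{=}(\tilde X(t))_{t\ge0}$ where $\tilde X$ is the subordinator of $\nu$-jumps larger than $1$, and then invokes the heavy-tail large-deviation theorem of Denisov--Dieker--Shneer (Theorem~2.1 in \cite{DDS08}) to get $\PRO(\tilde X(t)/c(t)>\delta(T)^{-1/(2\alpha)})\leq Ct\PRO(\tilde X(1)>c(t)\delta(T)^{-1/(2\alpha)})$ uniformly in $t$, finishing with $\PRO(\tilde X(1)>\cdot)\in\RV(-\alpha)$ and the normalization of $c$. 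What the paper's approach buys is that the uniformity in $t$ (and the passage from the subordinator at time $t$ to the tail of a single jump) is entirely delegated to the black-box LDP, which is exactly the bookkeeping you correctly flag as the hardest part of your version. What your approach buys is self-containedness: the split into jumps in $(1,a]$ and $(a,\infty)$, the Poisson bound on the large jumps, and Markov on the truncated part is a hands-on proof of the one-big-jump estimate in this special case, and then your use of Karamata, $tc(t)^{-\alpha}\ell(1/c(t))\to K$, and Potter to extract the exponent $\tfrac12-\tfrac{\eps}{2\alpha}$ with $\eps=\alpha/3$ mirrors the final step of the paper's computation. Both are valid; the paper's is shorter given the reference, yours is more elementary.
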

\begin{proof}
The idea of the proof is to apply a large deviation principle. For this purpose, let $(S^*_T(t))_{t\geq 0}$ be a subordinator independent of $S_T$ with exponent

$$
\Psi_{S^*_T} (u):= \int_{\delta (T)^{1/\alpha} }^{1} (e^{iux} -1)   \nu_{\tilde{S}_T} ( \dd x), \quad u \in \mathbb{R},
$$

and L\'evy measure

\begin{align*}
 \nu_{S^*_T} (\dd x ) := \left\{ \begin{array}{rr} \delta (T) x^{-\alpha-1} \ell ( \delta (T)^{1/\alpha} /x) \dd x, & \delta (T)^{1/\alpha} \leq x \leq 1, \\
                        0, & \text{otherwise.}
                        \end{array} \right.
\end{align*}

Further, define the following L\'evy processes by their exponents:

\begin{align*}
\Psi_{\tilde{X}} (u)&:= \int_{1}^{\infty} (e^{iux} -1) \nu(\dd x), \quad u \in \mathbb{R},\\
\Psi_{\tilde{S}_T} (u)&:= \int_{\delta (T)^{1/\alpha} }^{\infty} (e^{iux} -1)   \nu_{\tilde{S}_T} ( \dd x), \quad u \in \mathbb{R},
\end{align*}
with
\begin{align*}
 \nu_{\tilde{S}_T} (\dd x ) := \left\{ \begin{array}{rr} \delta (T) x^{-\alpha-1} \ell ( \delta (T)^{1/\alpha} /x) \dd x, & x \geq  \delta (T)^{1/\alpha}, \\
                        0, & x < \delta (T)^{1/\alpha} .
                        \end{array} \right.
\end{align*}

\textit{1st.\ Step:} It is clear from the construction that $\tilde S_T\overset{d}{=}S^*_T + S_T$. So that from the fact that $S^*_T>0$ we obtain
\begin{align}\label{eqn:1step}
 \PRO ( S_T (t) > \lambda  ) & \leq \PRO ( \tilde{S}_T (t) > \lambda ), \quad \text{ for every } \lambda,t > 0,
\end{align}
Further, by construction,
\begin{align}\label{eqn:2step}
  (\frac{\tilde{S_T} (t) }{\delta (T)^{1/\alpha} })_{t\geq 0}  \overset{d}{=} (\tilde{X} (t))_{t\geq 0}.
 \end{align}

Recall that $X$ belongs to the domain of attraction of a strictly stable L\'evy process with norming sequence $c$ and $\nu_+ \in \RV (- \alpha)$. Hence, by construction it implies that
\begin{align}\label{eqn:3step}
\PRO (\tilde{X} (1) >.) \in \RV (- \alpha).
\end{align}
\textit{2nd.\ Step:} We show (\ref{eqn:ST}).

For $\alpha<1$, we can apply a large deviation principle (see Theorem 2.1 in \cite{DDS08}) to the process $\tilde X$ (which lies in the domain of attraction of a stable process), which proves that there exists a constant $C>1$ such that for all $t > \delta(T)^{-1/2}$ and $T$ sufficiently large
\begin{align*}
 \PRO \left( \frac{ \tilde{X} (t)}{c(t) } >  \delta (T)^{- \frac{1}{2 \alpha}} \right) 
& \leq C t    \PRO \left(  \tilde{X} (1 ) > c(t)  \delta (T)^{- \frac{1}{2 \alpha}} \right)  \leq C \delta (T)^{ \frac{1}{3 }},
\end{align*}
where we used (\ref{eqn:3step}) in the second step.
Note that the constant $C$ does not depend on $t$. Hence, combining this with (\ref{eqn:1step}) and (\ref{eqn:2step})  leads to for all $t > \delta(T)^{-1/2}$ and $T$ sufficiently large
\begin{align*}
 \PRO \left( S_T (t) > c(t) \delta (T)^{\frac{1}{2 \alpha}} \right)
&\leq \PRO \left( \frac{ \tilde{S}_T (t)}{c(t)\delta (T)^{1/\alpha} } >  \delta (T)^{- \frac{1}{2 \alpha}} \right) 
= \PRO \left( \frac{ \tilde{X} (t)}{c(t)} >  \delta (T)^{- \frac{1}{2 \alpha}} \right) 
 \leq C \delta (T)^{1/3}.
\end{align*}

\end{proof}

\begin{rem}\label{rem:ST}
Inequality (\ref{eqn:ST})  holds as well for the subordinator $S_T$ defined in (\ref{eqn: charST}). The proof is essentially the same and is omitted.
\end{rem}

\subsection{A time-dependent L\'evy process over a constant boundary}
Now, we show  Lemma \ref{lem: constant} and Lemma \ref{lem: constant1}. We calculate the asymptotic tail behaviour as $T$ tends to infinity of the first-passage time over a constant boundary for a L\'evy process which depends on the end time point $T$. Lemma \ref{lem: constant} and Lemma \ref{lem: constant1} differ only in the considered process  as well as  the time scale. 

\begin{proof}[Proof of Lemma \ref{lem: constant}]
Recall that $Y_T = X + S_T $, where $X$ is defined in Theorem \ref{thm: nonincreasing}  and $S_T$ is a subordinator defined in (\ref{eqn: charST}). Since $X \in \mD (\alpha , \rho)$ there exists a deterministic function $c: (0,\infty) \to (0,\infty)$ such that 
\begin{align*} 
 \frac{X(t)}{c(t)} \overset{d}{\longrightarrow} Z(1), \quad \text{as } t \to \infty,
\end{align*}
where $Z$ is strictly stable L\'evy process with index $\alpha \in (0,1)$ and positivity parameter $\rho \in (0,1)$.

The upper bound is trivial since $S_T \geq 0$ a.s.\ and thus
\begin{align}\label{eqn: upperbound}
 \PRO \left( Y_T (t)  \leq x , \text{ } t \leq T  \right) &=\PRO \left( X (t) + S_T(t)  \leq x , \text{ } t \leq T  \right) \notag \\
 & \leq \PRO \left( X (t)  \leq x , \text{ } t \leq T  \right) \notag \\
 & = T^{-\rho+o(1)}, \quad \text{as } T \rightarrow \infty,
\end{align}
see Remark \ref{rem: spitzer}.

The idea of the proof of the lower bound is to apply some parts of Theorem 3.1 and Corollary 3.2 in \cite{SupLP}. The main difference between our result and \cite{SupLP} is that the process $Y_T$ depends on $T$.
For this purpose, we define 
\begin{align*}
M_T (t) := \sup_{s \leq t} Y_T (s),
\end{align*}
and thus,
\begin{align*}
  \PRO \left( Y_T (t)  < x, \text{ } t \leq T  \right) = \PRO \left( M_T (T) < x \right).
\end{align*}
\textit{1st.\ Step:} 

Let $z = z(T)$ with $ T^{-1 - |o(1)| } < z < T^{-1}$.  By the estimate (3.7) in the proof of Theorem 3.1 in \cite{SupLP} we have for $M_T$:
\begin{align*}
\PRO \left( M_T (T) < x \right)& \geq \frac{\kappa_T (z,0) \PRO (M_T (1/z) \geq x) V_T (x)}{e} - z \int_0^T e^{-zt} \PRO (M_T (t) <x) \dd t. 
\end{align*}
Then, since $\PRO (M_T (t) <x)  \leq \PRO (M (t) <x) $, for all $t \geq 0$, (cf.\ (\ref{eqn: upperbound}))  the upper bound of (3.5) in \cite{SupLP} applied to  $ \PRO (M (t) <x) $ gives, for every $T>1$  and $x >0$,
\begin{align*}
\PRO \left( M_T (T) < x \right)& \geq \frac{\kappa_T (z,0) \PRO (M_T (1/z) \geq x) V_T (x)}{e} - \frac{e}{e-1} V (x) z \int_0^{T} \kappa (1/t,0) \dd t\\
& =  \frac{\kappa_T (z,0) \PRO (M_T (1/z) \geq x) V_T (x)}{e} - \frac{e}{e-1} V (x) z K(1/T) ,
\end{align*}
where 
\begin{align*}
K(s) = \int_s^\infty \frac{\kappa (z,0)}{z^2} \dd z.
\end{align*}

The assumption $\limsup_{t \to 0^+} \PRO (X(t)\geq 0) < 1$ implies that $K(s)$ is well-defined (see Corollary 3.4 in  \cite{SupLP} for more details). Since $\kappa (z,0)$ is regularly varying at zero, by Karamata's theorem (\cite{Bin}, Theorem 1.5.11) we have $K(1/T) \leq c_1(\kappa) T \kappa (1/T,0)$, for $T \geq 1$. Hence,
\begin{align*}
\PRO \left( M_T (T) < x \right)& \geq \frac{\kappa_T (z,0) \PRO (M_T (1/z) \geq x) V_T (x)}{e} - \frac{e}{e-1} V (x) z c_1(\kappa) T \kappa (1/T,0) .
\end{align*}
Lemma \ref{lem: VT} implies, for $T>1$ sufficiently large, that 
\begin{align*}
V_T (x) \geq \tfrac12  V(x).
\end{align*}
Furthermore, the inequality (\ref{eqn: upperbound})  gives, for $T$ sufficiently large,
\begin{align*}
\PRO (M_T (1/z) \geq x) = 1 - \PRO (M_T (1/z) < x) \geq 1 - \PRO (M (1/z) < x) \geq \frac12.
\end{align*}
Hence, we obtain that 
\begin{align}\label{eqn:supLP}
\PRO \left( M_T (T) < x \right)& \geq \frac{\kappa_T (z,0)  V (x)}{4e} - \frac{e}{e-1} V (x) zT c_1(\kappa)  \kappa (1/T,0) .
\end{align}
\textit{2nd.\ Step:} 

 In this  step we estimate $\kappa_T$ from below by $\kappa$. Recall that the bivariate Laplace exponent $\kappa$ corresponds to $X$.
 We get, for every $a >0$, that
\begin{align*}
\kappa_T (a,0) &= \kappa (a,0) \cdot \exp \left(\int_0^\infty \left( e^{-t} - e^{-at}  \right)t^{-1} \left( \PRO (Y_T (t) \geq 0) - \PRO (X(t) \geq 0) \right) \dd t \right) .
\end{align*}
 First, we will prove  
 \begin{align*}
 \PRO (Y_T (t) \geq 0) - \PRO (X(t) \geq 0) \leq  o(1),  \quad \text{ as } T \to \infty, 
\end{align*}
for all $t \geq 0$ to obtain finally an estimate for $\kappa$. For this purpose, we distinguish between $0 <t  \leq  \delta(T)^{-1/2} $ and $t >  \delta(T)^{-1/2} $.

For $0 <t  \leq   \delta(T)^{-1/2} $ we have 
\begin{align}\label{eqn:tsmall}
& \PRO (Y_T (t) \geq 0)  - \PRO (X(t) \geq 0)  \notag \\
& \leq \PRO (X (t) \geq -S_T (t), S_T (t) = 0) + \PRO (S_T (t) >0) - \PRO (X(t) \geq 0)  \notag   \\
&= \PRO (X (t) \geq  0) + 1 - \PRO (S_T (t) =0) - \PRO (X(t) \geq 0)  \notag  \\
 &=1 - e^{-t \delta (T)  \nu_+ (1)}  \notag  \\
  &\leq 1 - e^{- \delta(T)^{-1/2} \delta (T) \nu_+ (1)}  \notag  \\
   &\leq C_1 \cdot \delta (T)^{1/2},
\end{align}
where $C_1 = \nu_+ (1)$. 
Now, let $t > \delta(T)^{-1/2}$. It holds that
 \begin{align*}
 \PRO & (Y_T (t) \geq 0) - \PRO (X(t) \geq 0) = \PRO ( - S_T (t) \leq X(t) <0)  \\
  & \leq \PRO \left( - S_T (t) \leq X(t) <0, S_T (t) < c(t) \delta(T)^{\tfrac{1}{2 \alpha} } \right) + \PRO \left( S_T (t) \geq c(t) \delta(T)^{\tfrac{1}{2 \alpha}} \right) \\
 & \leq \PRO \left( - c(t) \delta(T)^{\tfrac{1}{2 \alpha} } \leq X(t) <0  \right) + \PRO \left( S_T (t) \geq c(t) \delta(T)^{\tfrac{1}{2 \alpha}} \right).
\end{align*}
Due to  Stone's local limit theorem (see Theorem 8.4.2 in \cite{binbook} for non-lattice random walks resp.\ Proposition 13 in \cite{DonRiv} for Le\'vy processes) and the fact that the density of any $\alpha$-stable law is bounded  there exists a constant $C_2>0$ such that for all $t> \delta(T)^{-1/2}$ 
\begin{align*}
 \PRO \left(- c(t) \delta(T)^{ \tfrac{1}{2\alpha}}  < X (t) < 0 \right) \leq C_2 \delta(T)^{1/ (3 \alpha) } .
\end{align*}
 Combining this with Remark \ref{rem:ST} and (\ref{eqn:tsmall}) and using $\alpha<1$ here gives uniformly in $t$
\begin{align*}
\PRO & ( - S_T (t) \leq X(t) <0)  \leq  C_2  \delta (T)^{1/6} = o(1), \text{ as } T \to \infty .
\end{align*}
Hence, for $T>1$ sufficiently large we obtain by Frullani's integral for $a \in (0,1]$ that
\begin{align*}
\kappa_T (a,0) &\geq \kappa (a,0) \cdot  \exp \left(  (\ln a) \cdot C_2 \cdot \delta (T)^{1/6} \right).
\end{align*}
\textit{3rd.\ Step:} 

Inserting this upper bound of $\kappa_T$ in (\ref{eqn:supLP}) leads to
\begin{align*}\PRO \left( M_T (T) < x \right)& \geq \frac{\kappa_T (z,0) V (x)}{4e} - \frac{e}{e-1} V (x) z T c_1(\kappa)  \kappa (1/T,0) \\
 & \geq  \exp   \left(   (\ln  z) \cdot C_2 \delta (T)^{1/6} \right) \frac{\kappa (z,0) V (x)}{4e}  - \frac{e}{e-1} V (x) zT c_1(\kappa)  \kappa (1/T,0) \\
& \geq  z^{   C_2 \delta (T)^{1/6} } \cdot  \frac{\kappa (z,0)  V (x)}{4 e} \\
& \quad \cdot \left( 1 - 4 \frac{e^2}{e-1} \frac{z T c_1(\kappa)  \kappa (1/T,0)}{\kappa (z,0)}   z^{ -  C_2 \delta (T)^{1/6} } \right) .
\end{align*}
Potter's theorem (cf \cite{Bin}, Theorem 1.5.6) implies
\begin{align*}
 \frac{\kappa (1/T,0)}{\kappa (z,0)} \leq \frac{c_2(\kappa)}{(zT)^{(1+\rho)/2}}, \quad \text{ for } z \leq \frac1T.
\end{align*}
Now, set
\begin{align*}
z (T) &:= \left(\frac1T \right)^{\left( 1 + \frac{2 C_2 \delta(T)^{1/6}}{1+\rho} \right)^{-1} } \cdot \left( \frac{e-1}{8e^2 c_1( \kappa) c_2 (\kappa)} \right)^{- \left(\frac{1}{1+\rho} + C_2 \delta(T)^{1/6}\right)^{-1}}\\
& =  T^{- 1 - |o(1)|} \leq T^{- 1},
\end{align*}
for large $T$. Thus,
\begin{align}
4 \frac{e^2}{e-1} \frac{z c_1(\kappa) T \kappa (1/T,0)}{\kappa (z,0)}   z^{ -  C_2 \delta (T)^{1/6}} &\leq 4 \frac{e^2}{e-1}(zT)^{1- (1+\rho)/2}  c_1(\kappa) c_2(\kappa)   z^{ -  C_2 \delta (T)^{1/6}} \notag \\
& \leq \frac12 z T.
\end{align}
Since $zT \leq 1$ Frullani's integral gives
\begin{align*}
\kappa (z,0) &=\exp \left(  \int_0^\infty \left( e^{-t/T} - e^{-tz}  \right)t^{-1}  \PRO (X (t) \geq 0)   \dd t     \right)\\ 
& \qquad \cdot   \exp \left( \int_0^\infty \left( e^{-t} - e^{-t/T}  \right)t^{-1}  \PRO (X (t) \geq 0)   \dd t  \right)\\
 &\geq  \cdot   \exp \left(  \int_0^\infty \left( e^{-t/T} - e^{-tz}  \right)t^{-1} 1   \dd t     \right)\\
& \qquad \exp \left( \int_0^\infty \left( e^{-t} - e^{-t/T}  \right)t^{-1}  \PRO (X (t) \geq 0)   \dd t  \right)\\
&\geq zT \kappa (1/T,0).
\end{align*} 
Hence, we obtain with $zT \geq T^{-|o(1)|} $ that 
\begin{align*}
\PRO \left( M_T (T) < x \right) &\geq   zT \cdot  z^{C_2 \delta (T)^{1/6} } \cdot  \frac{\kappa (1/T,0)  V (x)}{8 e} = T^{-\rho + o(1)},
\end{align*}
where we used that  $\kappa (1/T,0) = T^{-\rho + o(1)} $ by \cite{Rog}.

 \end{proof}

Next, we show Lemma \ref{lem: constant1}.  Here, we look at the tail behaviour of the first passage time of $Y_T$  defined in (\ref{eqn: charYT1}). Note that this lemma deals with L\'evy processes in discrete time.

\begin{proof}[Proof  of Lemma \ref{lem: constant1}]
Recall that $Y_T = X - S_T $, where $X$ is defined in Theorem \ref{thm: nondecreasing}  and $S_T$ is a subordinator defined in (\ref{eqn: charST1}). Furthermore, note that $(Y_T (n))_{n \in \mathbb{N}}$ with $Y_T$ defined in (\ref{eqn: charYT1}) is a time discrete L\'evy process. The same holds for $(X(n))_{n \in \mathbb{N}}$. By construction, it is clear that 
$(X(n))_{n \in \mathbb{N}}$ satisfies Spitzer's condition with parameter $\rho \in (0,1)$.

The basics of fluctuation theory for the time discrete case are essentially the same as for the time continuous case. In the following we keep the notation for the inverse local time $L^{-1}$ and the ascending ladder process $H$. The bivariate Laplace exponent of $(L^{-1},H)$ is denoted by 
\begin{align*}
  \kappa (a,b) = \exp \left( \sum_{n=0}^\infty \int_{[0,\infty)} (e^{-n} - e^{-an-bx}) n^{-1} \PRO (X (n) \in \dd x)   \right),
\end{align*}
\textit{1st.\ Step:} 

By Proposition 2.4 in \cite{SupLP} we obtain that
\begin{align*}
 \PRO  \left( Y_T (n)  \leq x ,\text{ } \forall n =  1,..., \lfloor T\rfloor   \right) \leq \frac{1}{T(1-e^{-1}) } \sum_{m=0}^\infty e^{-m \cdot \frac{1}{T}}  \PRO  \left( Y_T (n)  \leq x ,\text{ } \forall n =  1,..., m  \right)  .
\end{align*}
By repeating the argument used for the continuous-time case in  \cite{bertoin}, Formula (VI.8), we obtain, for fixed $T>1$, that
\begin{align*}
 \sum_{m=0}^\infty e^{-m \cdot \frac{1}{T}}    \PRO  \left( Y_T (n)  \leq x ,\text{ } \forall n =  1,..., m  \right) \dd t  \leq T \kappa_T(1/T,0) V^{1/T}_T (x), \quad \text{for }x \geq 0.
\end{align*}
By definition we get  $V^{1/T}_T (x) \leq V_T (x)$, for all $T >1$ and $x \geq 0$. Hence,
\begin{align}\label{eqn: upperT}
  \PRO  \left( Y_T (n)  \leq x ,\text{ } \forall n =  1,..., \lfloor T\rfloor   \right)   \leq \min \left(  1, \frac{e}{e-1} \kappa_T (1/T,0) V_T (x) \right).
\end{align}
 Note that \cite{SupLP} proves this statement for the time-continuous case by using the same arguments (see their Theorem 3.1).
 
 The proof is complete as soon as we know that $\kappa_T (1/T,0) \leq T^{-\rho + o(1)}$ and $V_T (x) \leq 2 V(x)$,  for $T >1$ sufficiently large. \\
\textit{2nd.\ Step:}

In this step, we show  the upper bound of $\kappa_T$.  Due to the definition of $Y_T$ and $\kappa_T$ we have for every $T>1$
\begin{align*}
\kappa_T (1/T,0) &= \kappa (1/T,0) \cdot \frac{\kappa_T (1/T,0)}{\kappa (1/T,0)}\\
&= \kappa (1/T,0) \cdot \exp \left( \sum_{n=1}^\infty  (e^{-n/T} - e^{-n}) n^{-1} ( \PRO (X (n) \geq 0) - \PRO (Y_T (n) \geq 0))  \right).
\end{align*}
Now, we will prove
\begin{align*}
 \PRO (X (n) \geq 0) - \PRO (Y_T (n) \geq 0) \to 0, \quad \text{ as $T \to \infty$, uniformly in $n$};
\end{align*}
to obtain finally an estimate for $\kappa$. For this purpose, we distinguish  $n \leq   [\delta(N)^{-1/2}]$ and $n > [\delta(N)^{-1/2}]$. Due to the independence of $S_T$ and $Y_T$ we get, for $T>1$ and  $n \leq [\delta(N)^{-1/2}]$,  
\begin{align}\label{eqn:n1}
& \PRO (X(n) \geq 0) - \PRO (Y_T (n) \geq 0) \notag \\
& \leq   \PRO (Y_T (n) \geq - S_T (n), S_T(n)=0) + \PRO (S_T (n) >0)  -\PRO (Y_T (n) \geq 0) \notag \\
&\leq  \PRO (Y_T (n) \geq 0) \cdot \PRO (S(n)=0) + 1 - \PRO (S_T (n) =0) - \PRO (Y_T (n) \geq 0)\notag  \\
&\leq   1 - e^{- [\delta(N)^{-1/2}] \delta (T)  \nu_+ (1)} \notag \\
   &\leq \nu_+ (n) \cdot \delta (T)^{1/2} .
\end{align}
 Now, let $n >[\delta(N)^{-1/2}]$. It holds that
\begin{align*}
 \PRO & ( X(n) > 0) - \PRO (Y_T (n) \geq 0) \\
 & \leq \PRO ( 0 < X (n) < S_T (n), S_T (n) < c(n) \delta(T)^{\tfrac{1}{2\alpha} }) + \PRO (S_T (n) \geq c(n) \delta(T)^{\tfrac{1}{2\alpha}})\\ 
 &\leq \PRO \left( 0 < X (n) <  c(n) \delta(T)^{ \tfrac{1}{2 \alpha}}  \right) + \PRO (S_T (n) \geq c(n) \delta(T)^{ \tfrac{1}{2\alpha}}).
\end{align*}
Due to  Stone's local limit theorem (see Theorem 8.4.2 in \cite{binbook} for non-lattice random walks), the fact that the density of any $\alpha$-stable law is bounded, and because we are dealing with processes not requiring centering, we obtain that there exists a constant $C_1>0$ such that for $n>[\delta(N)^{-1/2}]$
\begin{align*}
  \PRO \left( 0 < X (n) <  c(n) \delta(T)^{ \tfrac{1}{2\alpha}} \right) \leq C_1 \delta(T)^{1/(3 \alpha)} .
\end{align*}
 Combining this with Lemma \ref{lem:ST} and (\ref{eqn:n1}) gives uniformly in $n$
\begin{align*}
 \PRO (X(n) \geq 0)  - \PRO (Y_T(n) \geq 0) \leq   C_2 \delta (T)^{1/6} \to 0, \text{ as } T \to \infty ,
\end{align*}
with a constant $C_2 >0$. Hence, Frullani's integral implies that
\begin{align} \label{eqn: KappaT1}
\kappa_T (1/T,0) \leq \kappa (1/T,0) T^{C_2 \delta(T)^{1/6}}.
\end{align}
\textit{3rd. Step:} 

 Lemma  \ref{lem: VT}  gives for $T>1$ sufficiently large, 
\begin{align}\label{eqn: VT1}
V_T (x) \leq 2  V(x).
\end{align}   
Since $\PRO (X(n) >0) \to \rho$, as $n \to \infty$, it follows from   \cite{Rog} that $\kappa (1/T,0) = T^{-\rho + o(1)}$.
Thus, inserting (\ref{eqn: VT1}) and (\ref{eqn: KappaT1})  in (\ref{eqn: upperT}) leads, for $T>1$ sufficiently large, to
\begin{align*}
 \PRO  \left( Y_T (n)  \leq 1 ,\text{ } \forall n =  1,..., \lfloor T\rfloor   \right) & \leq 
  2 \frac{e}{e-1} \kappa (1/T,0)  T^{-C_2 (\ln \ln T)^{-1/6}} V (x)  \\
  &= T^{-\rho + o(1)},
\end{align*}
as desired.
\end{proof}


\subsection{First passage time of a time-dependent subordinator}\label{sec:sub}
This section deals with the asymptotic behaviour of the first passage time for a subordinator depending on $T$ over an increasing boundary as $T$ converges to infinity. Lemma \ref{lem: n0N} serves as an auxiliary tool to prove the main result of this section, Lemma \ref{lem: alpha01}.

\begin{lemma}\label{lem: n0N}
Let $\alpha  \in (0,1)$ and $\gamma >0$ with $0 < \gamma \alpha <1$. Let $\eps >0$ such that 
\begin{align*}
 \gamma \alpha - \eps >0  \quad \text{ and } \quad  \gamma \alpha + \eps <1.
\end{align*}
For $N>1$ define $\delta(N)= (\ln \ln N)^{-1} \wedge \tfrac12$ and $N_1 (N)= \lfloor (\ln \ln N)^{4/(1-\gamma \alpha - \eps )} \rfloor$. Furthermore, let $S_N$ be a subordinator with Laplace transform 
 $$\ER (\exp (- \lambda S_N(1))) \leq \exp \left( - \delta (N) \lambda^{\alpha} \ell (\lambda  \delta(N)^{1/\alpha} ) \right), $$
for  $\lambda >0$ sufficiently small and $\ell$  a slowly varying function at zero.

Then, it holds
\begin{align*}
  \PRO \left( S_N (n)  \geq  (n+1)^\gamma , \text{ }\forall  n= N_1 (N) ,...,N  \right) \sim  1 , \quad \text{as } N \rightarrow \infty.
\end{align*}
\end{lemma}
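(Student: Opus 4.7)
The plan is to pass to the complementary event
$B_N := \{\exists n \in \{N_1(N),\ldots,N\} : S_N(n) < (n+1)^\gamma\}$
and show $\PRO(B_N) \to 0$. A pointwise term-by-term union bound would multiply the best available Markov estimate by a factor of order $N$, which is much larger than the stretched-exponential gain available on the present polylogarithmic scale $N_1(N)$. To cope with this I would exploit the monotonicity of the subordinator $S_N$ and cover $[N_1(N),N]$ by the dyadic blocks $[n_k, n_{k+1})$ with $n_k := 2^k$ and $k = k_0,\ldots,k_1$, where $k_0 := \lfloor \log_2 N_1(N)\rfloor$ and $k_1 := \lceil \log_2 N\rceil$. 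On any such block, $S_N(n) \geq S_N(n_k)$ together with $(n+1)^\gamma \leq n_{k+1}^\gamma$ implies that a failure somewhere in the block forces $S_N(n_k) < n_{k+1}^\gamma$, so
\[
\PRO(B_N) \leq \sum_{k=k_0}^{k_1} \PRO\bigl(S_N(n_k) < n_{k+1}^\gamma\bigr),
\]
a sum of only $O(\log N)$ terms.

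Each dyadic probability is then estimated by an exponential Chebyshev bound with parameter $\lambda_k := n_{k+1}^{-\gamma}$, chosen so that $\lambda_k n_{k+1}^\gamma = 1$. Since $\lambda_k \to 0$ uniformly for $k \geq k_0$ as $N \to \infty$, the Laplace hypothesis applies and gives
\[
\PRO\bigl(S_N(n_k) < n_{k+1}^\gamma\bigr) \leq e\cdot \ER\bigl(e^{-\lambda_k S_N(1)}\bigr)^{n_k} \leq e\exp\bigl(-n_k\,\delta(N)\,\lambda_k^{\alpha}\,\ell(\lambda_k\delta(N)^{1/\alpha})\bigr).
\]
Proposition 1.3.6 of \cite{binbook} provides the slow-variation lower bound $\ell(\mu)\geq \mu^{\eps/\gamma}$ for $\mu$ sufficiently small; substituting $\lambda_k = n_{k+1}^{-\gamma}$ and $n_k = 2^k$ then yields, for some $c>0$ and all $N$ large,
\[
\PRO\bigl(S_N(n_k) < n_{k+1}^\gamma\bigr) \leq e\exp\bigl(-c\, 2^{k(1-\gamma\alpha-\eps)}\,\delta(N)^{1+\eps/(\gamma\alpha)}\bigr).
\]

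Because $1-\gamma\alpha-\eps>0$, the exponents form a geometric sequence in $k$, so the sum is dominated by its first term at $k = k_0$. Using $N_1(N) = \lfloor (\ln\ln N)^{4/(1-\gamma\alpha-\eps)}\rfloor$ and $\delta(N) = (\ln\ln N)^{-1}$,
\[
2^{k_0(1-\gamma\alpha-\eps)}\,\delta(N)^{1+\eps/(\gamma\alpha)} \geq (\ln\ln N)^{4}\cdot(\ln\ln N)^{-(1+\eps/(\gamma\alpha))} = (\ln\ln N)^{3-\eps/(\gamma\alpha)},
\]
and the constraint $\eps < \gamma\alpha$ (already built into the hypotheses) forces $3-\eps/(\gamma\alpha)>2$. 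Consequently $\PRO(B_N)=O\bigl(\exp(-c(\ln\ln N)^{2})\bigr)\to 0$, which is the claim. The main obstacle in the argument is precisely the inadequacy of the naive pointwise union bound: only the combination of subordinator monotonicity (which replaces $N$ by $O(\log N)$ terms) with the Potter lower bound on $\ell$ (which extracts the small $\eps$-power needed to exploit the narrow margin $N_1(N)^{1-\gamma\alpha-\eps} = (\ln\ln N)^4$) produces the required super-logarithmic rate of decay.
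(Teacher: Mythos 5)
Your proof is correct, and it reaches the same decay rate through a genuinely different decomposition. The paper in fact \emph{does} use a straight term-by-term union bound over all $n$ from $N_1$ to $N$, but it avoids the inefficiency you anticipate by splitting the index range at an intermediate threshold $N_0 := \lfloor (\ln N)^{4/(1-\gamma\alpha-\eps)}\rfloor$ and using the fact that the Chebyshev estimate improves as $n$ grows: on $\{N_1,\dots,N_0\}$ there are only $O(N_0) = e^{O(\ln\ln N)}$ terms, each at most $\exp\bigl(1-\tfrac12 N_1^{1-\gamma\alpha-\eps}\delta(N)^{1+\eps/(\alpha\gamma)}\bigr)$; on $\{N_0,\dots,N\}$ there are $O(N)$ terms, but each is at most $\exp\bigl(1-\tfrac12 N_0^{1-\gamma\alpha-\eps}\delta(N)^{1+\eps/(\alpha\gamma)}\bigr)\approx\exp(-(\ln N)^{3})$, which swallows the extra $\ln N$ in the exponent. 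So your opening assertion that a pointwise union bound is inadequate is slightly off---what is inadequate is bounding \emph{every} term by the worst ($n=N_1$) estimate. Your dyadic-blocks argument is a clean alternative that uses the monotonicity of the subordinator to replace the $O(N)$-term sum by $O(\log N)$ terms at the outset, after which the geometric growth in $k$ of the exponent $2^{k(1-\gamma\alpha-\eps)}\delta(N)^{1+\eps/(\alpha\gamma)}$ immediately makes the sum comparable to its first term. Both routes rest on exactly the same ingredients---the Laplace-transform hypothesis, the exponential Chebyshev bound with $\lambda\asymp n^{-\gamma}$, and the Potter-type lower bound $\ell(\mu)\geq\mu^{\eps/\gamma}$---and both land on the same effective exponent $\asymp N_1^{1-\gamma\alpha-\eps}\delta(N)^{1+\eps/(\alpha\gamma)}\asymp(\ln\ln N)^{3-\eps/(\alpha\gamma)}$, which diverges because $\eps<\gamma\alpha$. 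Your version is arguably more economical since it uses a single threshold and a single geometric-series observation instead of a two-part split.
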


We mention that the last lemma can also be shown for $\alpha\geq 1$.

\begin{proof}
Denote $N_1 := N_1 (N)$ and define $N_0 := N_0 (N) = \lfloor (\ln N)^{4/(1-\gamma \alpha - \eps )} \rfloor $. 

Observe that 
\begin{align*}
   \PRO & \left( S_N (n)  \geq \tfrac12 - (n+1)^\gamma , \text{ }\forall n= N_0,...,N  \right) \\
&= 1 -  \PRO \left( \exists n \in \{ N_0,..., N\} : S_N (n) < (n+1)^{\gamma} \right).
\end{align*}
By Chebyshev's inequality  we obtain, for $N$ sufficiently large, that
\begin{align*}
 \PRO & \left( \exists n \in \{N_1,..., N\} : S_N (n) < (n+1)^{\gamma} \right) \\
& \leq \sum_{n=N_1}^{N_0}  \PRO \left( S_N (n ) < (n+1)^{\gamma}  \right) + \sum_{n=N_0}^N   \PRO \left( S_N (n ) < (n+1)^{\gamma}  \right)\\
 &= \sum_{n=N_1}^{N_0}  \PRO \left((n+1)^{-\gamma} S_N (n) < 1 \right) +  \sum_{n=N_0}^N \PRO \left((n+1)^{-\gamma} S_N (n) < 1 \right)\\
 &= \sum_{n=N_1}^{N_0}  \PRO \left(e^{-(n+1)^{-\gamma} S_N (n)} > e^{-1} \right) + \sum_{n=N_0}^N \PRO \left(e^{-(n+1)^{-\gamma} S_N (n)} > e^{-1} \right)\\
&\leq \sum_{n=N_1}^{N_0}   e^1 \ER \left( e^{- (n+1)^{-\gamma}  S_N (n)   } \right)  + \sum_{n=N_0}^N  e^1 \ER \left( e^{- (n+1)^{-\gamma}  S_N (n)   } \right) \\
 &\leq \sum_{n=N_1}^{N_0}  \exp{ \left( 1  - n (n+1)^{-\gamma \alpha}  \ell( (n+1)^{-\gamma}  \delta(N)^{1/\alpha} ) \delta (N)  \right) } \\
  & \quad +  \sum_{n=N_0}^N \exp{ \left( 1  - n (n+1)^{-\gamma \alpha} \ell((n+1)^{-\gamma} \delta(N)^{1/\alpha}) \delta (N) \right) } .
\end{align*}
Then, Proposition 1.3.6 in \cite{binbook} gives  $\ell (\lambda ) \geq \lambda^{\eps / \gamma}$, for $\lambda >0$ sufficiently small and thus,  for $N$ sufficiently large,
\begin{align*}
 \PRO & \left( \exists n \in \{N_1,..., N\} : S_N (n) < (n+1)^{\gamma} \right) \\
&\leq  \sum_{n=N_1}^{N_0}  \exp{ \left( 1  - n (n+1)^{-\gamma \alpha -\eps }  \delta (N)^{1+ \eps / (\alpha  \gamma) } \right) }  \\
& \quad + \sum_{n=N_0}^N \exp{ \left( 1  - n (n+1)^{-\gamma \alpha - \eps }  \delta (N)^{1+ \eps/ ( \alpha \gamma) } \right) } \\
 &\leq \sum_{n=N_1}^{N_0}  \exp{ \left( 1  - \tfrac12  n^{1-\gamma \alpha -\eps } \delta (N)^{1+ \eps /( \alpha \gamma) } \right) }  +  \sum_{n=N_0}^N \exp{ \left( 1  - \tfrac12 n^{1-\gamma \alpha-\eps } \delta (N)^{1+\eps / ( \alpha \gamma) }  \right) } ,
\end{align*}
where we used in the last last step the fact that $ \gamma \alpha +\eps  <1$ and thus, 
\begin{align*}
 (n+1)^{\gamma \alpha + \eps } \leq n^{\gamma \alpha + \eps } +1 \leq 2 n^{\gamma \alpha+ \eps }, \quad n \geq 1.
\end{align*}
Since $ \gamma \alpha - \eps >0 $ and $ \gamma \alpha +\eps  <1$  we get
\begin{align*}
 \PRO & \left( \exists n \in \{N_1,..., N\} : S_N (n) < (n+1)^{\gamma} \right) \\
 &\leq \exp{ \left( 1 + \ln ( N_0)  -  \tfrac12  N_1^{1-\gamma \alpha-\eps } \delta (N)^{1+ \eps / ( \alpha \gamma) }  \right) }  \\
& \quad + \exp{ \left( 1 + \ln N - \tfrac12  N_0^{1-\gamma \alpha-\eps } \delta (N)^{1+ \eps / (\alpha \gamma) } \right) } \\
 &\leq \exp{ \left( 1 + \left(3/(1-\gamma \alpha)\right) \ln \ln N - \tfrac12 (\ln \ln N)^2  \right) }  + \exp{ \left( 1 + \ln N - (\ln N)^2 \right) } \\
& \leq \exp{ \left( - \ln \ln N \right)}.
\end{align*}
Hence, we obtain finally
\begin{align*}
   \PRO & \left( S_N (n)  \geq \tfrac12 - (n+1)^\gamma , \text{ }\forall n= N_0,...,N  \right) \\
&= 1 -  \PRO \left( \exists n \in \{ N_0,..., N\} : S_N (n) < (n+1)^{\gamma} \right) \\
&\geq 1 - e^{ - \ln \ln N }  \quad \longrightarrow 1, \quad \text{as  } N \rightarrow \infty.
\end{align*}
\end{proof}

Now, we proceed with the proof of Lemma \ref{lem: alpha01}.

\begin{proof}[Proof of Lemma \ref{lem: alpha01}]
We start by transforming this problem to the discrete time as follows
\begin{align*}
 \PRO \left( S_N (t)  \geq - \frac12 + t^\gamma , \text{ } 0 \leq t \leq N  \right) \geq \PRO \left( S_N (n)  \geq - \frac12 + (n+1)^\gamma , \forall n =1,..., N  \right),
\end{align*} 
where we used that $S_N$ is nondecreasing.

Since $\alpha, \gamma \alpha \in (0,1)$ there exist constants $\eps_1 \eps_2, \eps_3, \eps_4 >0$  such that $\alpha - \eps_1 >0$, $\alpha + \eps_2 <1$, $\gamma - \eps_3 >0$ and $\alpha \gamma + \eps_4 <1$. Define
\begin{align*}
 \eps := \min \left\{ \frac{\eps_1 \eps_2 \eps_3}{1 + \eps_1} , \eps_4  \right\}.
\end{align*}
Furthermore, define $N_1  := \lfloor (\ln \ln N)^{4/(1-\gamma \alpha- \eps)} \rfloor$.

Note that since $ \gamma \alpha + \eps <1$ we have
\begin{align}\label{eqn:absn+1}
 (n+1)^{\gamma \alpha + \eps} \leq n^{\gamma \alpha+ \eps} +1 \leq 2 n^{\gamma \alpha + \eps}, \quad n \geq 1.
\end{align}
Since $S_N$ is associated (cf.\ \cite{ass} or \cite{AKS}, Lemma 14)  we obtain that
\begin{align}\label{eqn: insert}
 \PRO & \left( S_N (n)  \geq - \frac12 + (n+1)^\gamma , \text{ }  \forall n = 1,..., N \right) \notag \\
& \geq    \PRO  \left( S_N (n)  \geq - \frac12 + (n+1)^\gamma , \text{ } \forall n =1,..., N_1 -1 \right)   \notag  \\
& \quad \cdot  \PRO  \left( S_N (n)  \geq - \frac12 + (n+1)^\gamma , \text{ }  \forall n =N_1,..., N  \right).
\end{align}
Due to Lemma \ref{lem: n0N}  it is left to show that  
\begin{align}\label{eqn:aim}
 \PRO  \left( S_N (n)  \geq - \frac12 + (n+1)^\gamma , \text{ } \forall n =1,..., N_1 -1 \right)  = N^{o(1)}.
\end{align}
Let us show now (\ref{eqn:aim}). \\ 
 Again the  fact that $S_N$ is associated leads to
\begin{align}
  \PRO  \left( S_N (n)  \geq - \frac12 + (n+1)^\gamma , \text{ } \forall n= 1,...,N_1  \right)
& \geq  \PRO  \left( S_N (n)  \geq  (n+1)^\gamma , \text{ } \forall n= 1,...,N_1  \right) \notag \\
& \geq \prod_{n=1}^{N_1 } \PRO  \left( S_N (n)  \geq  (n+1)^\gamma  \right) . \label{eqn:refer20}
\end{align}
 Since $\frac{2 \alpha}{\alpha-1}  < 0$ we obtain $(n+1)^{-\gamma} (\ln \ln N)^{\frac{2 \alpha}{\alpha-1} }\to 0$, as $N \to \infty$, for all $n \geq 1$. Then, applying Chebyshev's inequality, for $N $ sufficiently large, implies (using (\ref{eqn:refer20}))
\begin{align*}
  \PRO & \left( S_N (n)  \geq - \frac12 + (n+1)^\gamma , \text{ } \forall n= 1,...,N_1  \right) \\
& \geq \prod_{n=1}^{N_1 } 1 - \PRO  \left( S_N (n)  <  (n+1)^\gamma  \right) \\
& = \prod_{n=1}^{N_1 } 1 - \PRO  \left( \exp{ \left(  - (\ln \ln N)^{\frac{2}{\alpha-1}}  (n+1)^{-\gamma} S_N (n) \right)}  >  \exp{ \left( -(\ln \ln N)^{\frac{2}{\alpha-1}}  \right)} \right) \\
& \geq  \prod_{n=1}^{N_1 } 1 -  \exp \left( (\ln \ln N)^{\frac{2}{\alpha-1} }  \right. \\
& \qquad \qquad  \left. - \frac{1}{4 \alpha} (\ln \ln N)^{\frac{2 \alpha}{\alpha-1} }  n (n+1)^{ -\gamma \alpha} \ell ((\ln \ln N)^{\frac{2}{\alpha-1}}  (n+1)^{-\gamma}  \delta (N)^{1/\alpha}) \delta (N) \right) . 
\end{align*}
By Proposition 1.3.6 in \cite{binbook} we get $\ell (\lambda) \geq \lambda^{\eps/ \gamma}$ for $\lambda >0$ sufficiently small  and  thus, combining this with (\ref{eqn:absn+1}) gives, for $N $ sufficiently large,
\begin{align*} 
 \PRO & \left( S_N (n)  \geq - \frac12 + (n+1)^\gamma , \text{ } \forall n= 1,...,N_1  \right) \\
& \geq  \prod_{n=1}^{N_1 } 1 -  \exp{ \left( (\ln \ln N)^{\frac{2}{\alpha-1} }  - \tfrac{1}{8 \alpha} (\ln \ln N)^{\frac{2 \alpha + 2 (\eps/ \gamma)}{\alpha-1} }   n^{ 1-\gamma \alpha-\eps } \delta (N)^{1+ \eps / (\alpha \gamma)} \right) }  \\
& \geq  \prod_{n=1}^{N_1 } 1 -  \exp{ \left( (\ln \ln N)^{\frac{2}{\alpha-1}}  - \tfrac{1}{8 \alpha} (\ln \ln N)^{\frac{2 \alpha + 2 (\eps/ \gamma)}{\alpha-1} }  \delta (N)^{1+  \eps / (\alpha \gamma) } \right) }  \\
& \geq  \prod_{n=1}^{N_1 } 1 -  \exp{ \left( (\ln \ln N)^{\frac{2}{\alpha-1}}  - \tfrac{1}{8 \alpha} (\ln \ln N)^{\frac{2 \alpha + 2 (\eps/ \gamma) - \alpha +1   + \eps /  (\alpha \gamma)  - \eps /   \gamma }{\alpha-1}} \right) }
\end{align*}
Recall that  $\eps \leq  \frac{\eps_1 \eps_2 \eps_3}{1 + \eps_1}$. Thus,
\begin{align*}
 2 \alpha + 2 (\eps/ \gamma) - \alpha +1   + \eps /  (\alpha \gamma)  - \eps /   \gamma & =   \alpha + 1 + \eps \left( \frac{1}{\gamma} + \frac{1}{\gamma \alpha} \right) \\
& \leq  \alpha + 1 + \eps \frac{1 + \eps_1}{\eps_1 \eps_3} 
 \leq  \alpha + 1 + \eps_2 < 2.
\end{align*}
Thus, we have, for $N$ sufficiently large,
\begin{align*}
 (\ln \ln N)^{\frac{2 }{\alpha-1}}- \tfrac{1}{8 \alpha} (\ln \ln N)^{\frac{2 \alpha + 2 (\eps/ \gamma) - \alpha +1   + \eps /  (\alpha \gamma)  - \eps /   \gamma }{\alpha-1}} 
& \leq - \tfrac{1}{10 \alpha} (\ln \ln N)^{ \frac{\alpha+1 + \eps_2}{\alpha-1}  }  <0. 
\end{align*}
Therefore, finally proving (\ref{eqn:aim}) we get that, for $N$ sufficiently large,
  \begin{align*}
    \PRO   \left( S_N (n)  \geq - \frac12 + (n+1)^\gamma , \text{ } n= 1,...,N_1  \right) 
  & \geq \frac{1}{2} \prod_{n=1}^{N_1 } 1 -  \exp{ \left(  - \frac{1}{10 \alpha} (\ln \ln N)^{ \frac{\alpha+1+ \eps_2}{\alpha-1} }  \right) }  \\
   & \geq  \frac{1}{2}  \left( \frac{1}{10 \alpha} \right)^{N_1} \left( (\ln \ln N)^{- \frac{\alpha+1 + \eps_2}{1-\alpha}  } \right)^{N_1} \\
& = N^{o(1)}.
  \end{align*}
\end{proof}



\noindent {\bf Acknowledgement:} Frank Aurzada and Tanja Kramm were supported by the DFG Emmy Noether programme. We are grateful to Mikhail Lifshits for valuable discussions on the subject of the article. We would like to thank the two referees for their detailed comments, which improved the exposition of the paper significantly and corrected earlier mistakes.

\bibliographystyle{abbrv}

\end{document}